\newtheorem{remark}{Remark}
\newtheorem{thm}{Theorem}[section]
\newtheorem{lem}[thm]{Lemma}
\newtheorem{prop}[thm]{Proposition}
\theoremstyle{remark}
\newcommand{\Z}{{\mathbb Z}}
\newcommand{\Q}{{\mathbb Q}}
\newcommand{\N}{{\mathbb N}}
\newcommand{\C}{{\mathbb C}}
\begin{document}
\title[ Harmonic numbers]{Linear Independence of Harmonic Numbers over the field of Algebraic Numbers}
\author[Tapas Chatterjee and Sonika Dhillon]{Tapas Chatterjee\textsuperscript{1} and Sonika Dhillon\textsuperscript{2}}

\address[T. Chatterjee]
      {Department of Mathematics,
      Indian Institute of Technology Ropar,
      Rupnagar, Punjab -140001,
      India.}
\address[S. Dhillon]
       {Department of Mathematics,
      Indian Institute of Technology Ropar,
      Rupnagar, Punjab -140001,
      India.}
\email[Tapas Chatterjee]{tapasc@iitrpr.ac.in}

\email[Sonika Dhillon]{sonika@iitrpr.ac.in}

\subjclass[2010]{Primary 11J81, 11J86; secondary 11J91}

\keywords{Baker's Theory, Digamma function, Galois theory, Gauss formula, Harmonic Numbers, Linear forms in logarithm, Linear independence.}

\maketitle



\begin{abstract}
Let $H_n =\sum\limits_{k=1}^n \frac{1}{k}$ be the $n$-th harmonic number. Euler extended it to complex arguments and defined $H_r$ 
for any complex number $r$ except for the negative integers. In this paper, we give a new proof of the transcendental 
nature of $H_r$ for rational $r$. For some special values of $q>1,$ we give an upper bound for   the number of linearly independent
harmonic numbers $H_{a/q}$ with $ 1 \leq a \leq q$ over the field of algebraic numbers.
Also, for any finite set of odd  primes $J$ with $|J|=n,$ define
$$W_J=\overline{\Q}-\text {span of } \{ H_1, \  H_{a_{j_i}/q_i} | \ 1 \leq a_{j_i} \leq q_i -1, \ 1 \leq j_i \leq q_i-1, \ \ \forall q_i \in J\}.$$
 Finally, we show that $$\text{ dim }_{\overline{\Q}} ~W_J=\sum\limits_{\substack{i=1 \\ q_i \in J}}^n \frac{\phi (q_i )}{2} + 2.$$

\end{abstract}

\section{\bf Introduction}
Let $n$ be a natural number. The $n$-th harmonic number is denoted by $H_n$ and is defined as the sum of the reciprocal of first $n$ natural numbers. Thus
$$ H_n= \sum_{k=1}^n \frac{1}{k}.$$
In 1734, Euler gave an integral representation of the harmonic numbers $H_n$ as
$$H_n =\int_0^1 \frac{1-x^n}{1-x}dx$$
and therefore, for any complex number $r,$  one can define the harmonic number $H_r$ by using the following integral representation
\begin{equation}\label{y}
H_r=\int_0^1\frac{1-x^r}{1-x} dx
\end{equation}
except for  the negative integers where it has a  simple pole. \textcolor{red}{This is easily found from the well-known
series representation of the harmonic numbers given by 
$$H_r=r\sum_{k=1}^\infty{1\over k(r+k)}.$$}

Note that, $H_r$ satisfies the following recurrence relation
$$H_r=H_{r-1}+{1\over r}$$ and the reflection relation 
$$H_r- H_{1-r}={1\over r}+{1\over r-1}-\pi\cot\pi r.$$

It is also known that $H_r$ is closely related to the digamma function for $r$ not being a negative integer by
$$H_r=\psi(r+1) + \gamma$$
 where $\psi(r)$  is the classical digamma function \textcolor{red}{defined as the logarithmic derivative of the classical gamma function $\Gamma(x)$, that is
 $$\psi(x)= \frac{d}{dx}(\ln \Gamma(x))= \frac{\Gamma'(x)}{\Gamma(x)}$$} and $\gamma$  is the Euler-Mascheroni constant.

Also, observe that $\psi(1)=-\gamma$ and more generally, for any rational number $a/q$ with $(a,q)=1$ and $1 \leq a \leq q$, 
we have the following formula of Gauss, discovered in 1813, (for a proof, see \cite{MS1} and \cite{CK})
\textcolor{red}{\begin{equation}\label{0001}
\psi(a/q)= - \gamma -\log(2q) - \frac{\pi}{2} \cot(\pi a/q) + 2\sum_{n=1}^{\lfloor \frac{q-1}{2} \rfloor}  \cos (\frac{2 \pi n a}{q})  \log \sin(\pi n/q).
\end{equation}}

  Hence, for the fractional argument $a/q \text{ with $(a,q)=1,$  }$ the harmonic number satisfies the following identity
\begin{equation}\label{x}
 H_{a/q}= \frac{q}{a}   -\log (2q) - \frac{\pi}{2} \cot(\pi a/q) + 2\sum_{n=1} ^{\lfloor \frac{q-1}{2} \rfloor}    \cos (\frac{2 \pi n a}{q})  \log \sin(\pi n/q).
 \end{equation}
 \smallskip
 In section 3, we will study about  the arithmetic nature of harmonic numbers $H_r$ and their linear independence over the field of algebraic numbers.
 \textcolor{red}{In a forth coming paper \cite{CD}, we study these problems for a more general class of Harmonic numbers. }

\section{\bf Notations and Preliminaries}
In this section, we summarize the known results and notations which we are going to use throughout the paper.
Let $\Q$, $\overline{\Q}$ and $\C$ denote the field of rational numbers, the field of algebraic numbers 
and the field of complex numbers respectively. 

We also need the Baker's theory, which has always been the most important tool for proving the linear 
independence of algebraic  linear combination of logarithm of non-zero algebraic numbers. 
The following proposition due to Baker, (see \cite{AB})  will play a crucial role in proving some of the theorems.

\begin{prop} If $\alpha_1,...,\alpha_n$ are non-zero algebraic numbers such that $\log  \alpha_1,...,\log  \alpha_n$
are linearly independent over the field of rational numbers, then $1, \log  \alpha_1,...,\log \alpha_n$ are linearly independent over the field of algebraic numbers.
\end{prop}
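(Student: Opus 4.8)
The plan is to prove this statement, which is exactly the inhomogeneous form of Baker's theorem on linear forms in logarithms, by contradiction via the Gelfond--Baker method. Suppose the conclusion fails, so that there are algebraic numbers $\beta_0, \beta_1, \ldots, \beta_n$, not all zero, with $\beta_0 + \beta_1\log\alpha_1 + \cdots + \beta_n\log\alpha_n = 0$. Since $\log\alpha_1, \ldots, \log\alpha_n$ are linearly independent over $\Q$, none of them vanishes and the coefficient of at least one logarithm is nonzero; so I would first normalize, dividing through to write $\log\alpha_n = \beta_0 + \beta_1\log\alpha_1 + \cdots + \beta_{n-1}\log\alpha_{n-1}$ with the $\beta_i$ algebraic. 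All the data then live in a single number field $K = \Q(\alpha_1, \ldots, \alpha_n, \beta_0, \ldots, \beta_{n-1})$ of some degree $d$, and the goal becomes to show that no such relation can hold.

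Next I would construct an auxiliary entire function. Fixing a large integer parameter $L$, set
$$\Phi(z) = \sum_{\lambda_0=0}^{L}\sum_{\lambda_1=0}^{L}\cdots\sum_{\lambda_{n}=0}^{L} p(\lambda)\, z^{\lambda_0}\, \alpha_1^{\lambda_1 z}\cdots\alpha_{n}^{\lambda_{n} z},$$
where $\alpha_i^{\,w} := e^{w\log\alpha_i}$ and the normalized relation is used to substitute for $\log\alpha_n$, so that each summand has the shape $z^{\lambda_0}e^{\mu_\lambda z}$ with $\mu_\lambda$ an algebraic combination of $\log\alpha_1, \ldots, \log\alpha_{n-1}$ and $\beta_0$. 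The unknowns are the coefficients $p(\lambda)\in K$. I would impose that $\Phi$ together with its derivatives up to some order $M$ vanish at the integers $z = 1, 2, \ldots, h$. Since the number of coefficients grows like $(L+1)^{n+1}$ while the number of linear conditions is $h(M+1)$, choosing the parameters so that the former dominates lets Siegel's lemma supply a nontrivial solution $p(\cdot)$ whose entries are algebraic integers of polynomially bounded height.

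The crux, and the step I expect to be the main obstacle, is the \emph{extrapolation}: the vanishing imposed on a small grid must be shown to propagate to a much larger grid and to higher order. The mechanism is a clash of two estimates. On the one hand, each nonzero derivative value of $\Phi$ at an integer point is a nonzero algebraic number whose house and denominator are controlled, so the product formula (the Liouville-type inequality $|\mathrm{Nm}(\Gamma)|\geq 1$ for nonzero algebraic $\Gamma$) gives it a lower bound in absolute value. On the other hand, applying the maximum modulus principle to $\Phi(z)$ divided by $\prod_{j}(z-j)^{M+1}$ over the known zeros yields an upper bound on a larger disc that is extremely small, because $\Phi$ is entire of finite order with growth governed by $L$ and the sizes of the $\mu_\lambda$. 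When the upper bound beats the lower bound, the derivative value must in fact be zero, enlarging the zero set; iterating this a bounded number of rounds amplifies the order of vanishing. The delicate part throughout is tuning $L$, $M$, $h$, and the number of rounds so that every inequality closes simultaneously.

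Finally, once $\Phi$ and enough derivatives are known to vanish on a grid far larger than its effective degree permits, I would extract a quantity that is both a nonzero algebraic number and, by the accumulated bounds, strictly smaller in modulus than the Liouville estimate allows — typically a distinguished coefficient or a Vandermonde-type determinant in the distinct exponents $\mu_\lambda$. Here the hypothesis that $\log\alpha_1, \ldots, \log\alpha_n$ are $\Q$-linearly independent is precisely what keeps these exponents separated, so the determinant does not degenerate. The resulting contradiction shows the auxiliary data cannot exist, hence the normalized relation is impossible. Therefore no nontrivial algebraic relation $\beta_0 + \sum_{i} \beta_i\log\alpha_i = 0$ holds, and $1, \log\alpha_1, \ldots, \log\alpha_n$ are linearly independent over $\overline{\Q}$, as claimed.
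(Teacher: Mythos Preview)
The paper does not prove this proposition at all: it is stated as a known result due to Baker and simply cited from his monograph \cite{AB}. So there is no in-paper argument to compare your proposal against.

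Your outline is the classical Gelfond--Baker scheme --- auxiliary function built via Siegel's lemma, extrapolation of zeros through the tension between Liouville lower bounds and maximum-modulus upper bounds, and a terminal contradiction from a nonvanishing determinant --- which is indeed the route taken in Baker's original work and his book. As a high-level plan it is accurate; the genuinely hard content (the precise parameter balancing across the induction steps, the careful height and denominator bookkeeping, and the multiplicity estimates that make the extrapolation close) is acknowledged but not carried out, so what you have is a correct roadmap rather than a proof. For the purposes of this paper that is more than sufficient, since the authors treat the result as a black box.
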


\textcolor{red}{We state some simple applications of Baker's theorem as were done in \cite{CM}, \cite{MM} and \cite{RM} to  resolve the problem of transcendental nature of algebraic linear combination of $\pi$, logarithm of non-zero algebraic numbers and positive real units.}

\begin{prop}
Let $ \alpha_1,..., \alpha_n$ be positive algebraic numbers. If $ c_0,c_1,...,c_n$ are algebraic numbers with $ c_0 \neq 0$, then
$$ c_0 \pi + \sum_{j=1}^n c_j \log \alpha_j$$
 is a transcendental number and hence non-zero.

\end{prop}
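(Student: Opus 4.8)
The plan is to argue by contradiction, reducing the statement to a single application of Baker's theorem (Proposition~2.1). Assume that $\beta := c_0\pi + \sum_{j=1}^n c_j\log\alpha_j$ is algebraic. The first step is to pass to a rationally independent subsystem of logarithms: after relabelling, let $\log\alpha_1,\dots,\log\alpha_m$ be a maximal subset of $\{\log\alpha_1,\dots,\log\alpha_n\}$ that is linearly independent over $\Q$, so each remaining $\log\alpha_j$ is a rational linear combination of $\log\alpha_1,\dots,\log\alpha_m$. Collecting terms, I may rewrite $\beta = c_0\pi + \sum_{i=1}^m d_i\log\alpha_i$ with $d_1,\dots,d_m\in\overline{\Q}$.

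The second step is to absorb $\pi$ into the same framework. With the principal branch of the logarithm one has $\log(-1)=i\pi$, so $\pi=-i\log(-1)$ and $\beta = -ic_0\log(-1) + \sum_{i=1}^m d_i\log\alpha_i$ becomes an $\overline{\Q}$-linear combination of logarithms of the nonzero algebraic numbers $-1,\alpha_1,\dots,\alpha_m$. Here the positivity hypothesis is what makes things work: each $\log\alpha_i$ is real while $\log(-1)=i\pi$ is purely imaginary, so in any relation $q_0\log(-1)+\sum_{i=1}^m q_i\log\alpha_i=0$ with rational $q_0,q_i$, comparing imaginary parts forces $q_0=0$, and then comparing real parts forces all $q_i=0$ by the choice of $\log\alpha_1,\dots,\log\alpha_m$. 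Hence $\log(-1),\log\alpha_1,\dots,\log\alpha_m$ are linearly independent over $\Q$.

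By Proposition~2.1, the numbers $1,\log(-1),\log\alpha_1,\dots,\log\alpha_m$ are then linearly independent over $\overline{\Q}$. But the assumed algebraicity of $\beta$ gives the relation $\beta + ic_0\log(-1) - \sum_{i=1}^m d_i\log\alpha_i = 0$, whose coefficients $\beta,\,ic_0,\,-d_i$ are all algebraic and whose coefficient $ic_0$ of $\log(-1)$ is nonzero since $c_0\neq 0$. This contradicts the $\overline{\Q}$-linear independence just established, so $\beta$ must be transcendental; in particular $\beta\neq 0$.

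The one delicate point, which I would check carefully, is the reduction in the first two paragraphs: I must be sure that replacing the $\log\alpha_j$ by a maximal $\Q$-independent subset cannot render $\log(-1)$ dependent on the survivors, and this is exactly what positivity of the $\alpha_j$ guarantees through the real/imaginary splitting. The extreme case $m=0$ (all $\alpha_j=1$) is harmless: there $\beta=c_0\pi=-ic_0\log(-1)$ and the same contradiction is obtained from the $\overline{\Q}$-independence of $1$ and $\log(-1)$ alone.
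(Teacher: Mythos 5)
Your proof is correct and is exactly the standard argument behind this proposition: the paper itself gives no proof, deferring to \cite{CM}, \cite{MM} and \cite{RM}, where the same reduction is used --- pass to a maximal $\Q$-independent subset of the (real) logarithms, write $\pi = -i\log(-1)$, note that the purely imaginary $\log(-1)$ stays $\Q$-independent of the real $\log\alpha_i$, and invoke Baker's theorem with the nonzero coefficient $ic_0$ on $\log(-1)$. Your handling of the degenerate case $m=0$ and of the real/imaginary splitting is exactly the point that needs care, and you got it right.
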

\begin{prop}Let  $\alpha_1,\alpha_2,...,\alpha_n$ be \textcolor{red}{positive real units} in a number field of degree $>1.$  
Let $r$  be a positive rational number unequal to 1. If  $c_0,c_1,...,c_n$  are algebraic numbers 
with  $c_0\neq 0,$  and $d$  is an integer, then
$$ c_0 \pi + \sum_{j=1}^n c_j \log \alpha_j + d\log r$$
is a transcendental number and hence non-zero.
\end{prop}
We will use the next proposition due to Chatterjee and Gun to prove some of the theorems in the later section. 
This proposition is motivated from the idea of multiplicative independent  units in a cyclotomic field. Here is the statement of the proposition.

  \begin{prop}
  For any finite set $J$ of   primes in $\N$ with $ p _i \in J$ and $q_i =p_i^{m_i},$ where $m_i \in \N,$ and let $\zeta_{q_i}$ be a primitive $q_i$-th root of unity. Then the numbers 
  $$ 1-\zeta_{q_i},\hspace{.1cm} \frac{1-\zeta_{q_i}^{a_{j_i}}}{1-\zeta_{q_i}}, \hspace{.3cm} \text {where }\hspace{.2cm}  1 <a_{j_i} < q_i/2, \hspace{.1cm} (a_{j_i},q_i)=1 \text{ and }  
  1 < j_i < q_i/2, \ \  \forall \  p_i \in J,$$
  
  are multiplicatively independent. 
  \end{prop}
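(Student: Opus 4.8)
The plan is to show that every integer multiplicative relation among these numbers is forced to be trivial. Suppose $\prod_{q_i\in J}(1-\zeta_{q_i})^{c_i}\prod_{q_i\in J}\prod_{a}\left(\frac{1-\zeta_{q_i}^{a}}{1-\zeta_{q_i}}\right)^{d_{i,a}}=1$ with all $c_i,d_{i,a}\in\Z$, the inner product running over $1<a<q_i/2$ with $\gcd(a,q_i)=1$. I would work inside the compositum $K=\Q(\zeta_N)$, $N=\prod_{q_i\in J}q_i$. Because the $q_i$ are pairwise coprime prime powers, the subfields $K_i=\Q(\zeta_{q_i})$ are pairwise linearly disjoint over $\Q$ and $\mathrm{Gal}(K/\Q)\cong\prod_i(\Z/q_i\Z)^{*}$; this product decomposition is what will eventually let me separate the primes.

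First I would remove the factors $1-\zeta_{q_i}$ by a valuation argument. Fix $q_i\in J$ and a prime $P$ of $K$ above $p_i$. Since $N_{K_i/\Q}(1-\zeta_{q_i})=p_i$ (the value at $1$ of the $q_i$-th cyclotomic polynomial), $1-\zeta_{q_i}$ has strictly positive $P$-adic valuation, whereas every $\frac{1-\zeta_{q_i}^{a}}{1-\zeta_{q_i}}$ with $\gcd(a,q_i)=1$ is a global unit (standard for prime-power conductor) and every $1-\zeta_{q_{i'}}$ with $i'\neq i$ is a $p_i$-adic unit. Hence $c_i\,v_P(1-\zeta_{q_i})=0$, i.e.\ $c_i=0$, for each $i$. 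Next I would separate the distinct prime powers by applying the norm $N_{K/K_{i_0}}$ to the remaining relation, for each $i_0$ in turn. For $i\neq i_0$, the group $\mathrm{Gal}(K/K_{i_0})$ surjects onto $\mathrm{Gal}(K_i/\Q)$, so the norm of $\frac{1-\zeta_{q_i}^{a}}{1-\zeta_{q_i}}$ down to $K_{i_0}$ is a power of its $\Q$-norm, which is $1$ (numerator and denominator both of $\Q$-norm $p_i$); for $i=i_0$ the norm is $\left(\frac{1-\zeta_{q_{i_0}}^{a}}{1-\zeta_{q_{i_0}}}\right)^{[K:K_{i_0}]}$. Thus the relation collapses to $\prod_{a}\left(\frac{1-\zeta_{q_{i_0}}^{a}}{1-\zeta_{q_{i_0}}}\right)^{[K:K_{i_0}]\,d_{i_0,a}}=1$ inside $K_{i_0}$, and the whole proposition reduces to the core claim: in $\Q(\zeta_q)$ with $q=p^{m}$, the cyclotomic units $\xi_a=\frac{1-\zeta_q^{a}}{1-\zeta_q}$ with $1<a<q/2$, $\gcd(a,q)=1$, are multiplicatively independent modulo roots of unity.

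For the core claim I would pass to the totally real subfield. With $\zeta_q=e^{2\pi i/q}$ one finds $|\sigma_c(\xi_a)|=|\sin(\pi ac/q)|/|\sin(\pi c/q)|$ for $c\in(\Z/q\Z)^{*}$, so $\xi_a$ equals, up to a root of unity, the positive real unit $\eta_a=\sin(\pi a/q)/\sin(\pi/q)$; hence it suffices to show the $\eta_a$ multiplicatively independent. Assume $\prod_a\eta_a^{m_a}=1$; taking $\log|\sigma_c(\cdot)|$ and putting $g(x)=\log|1-\zeta_q^{x}|$ turns this into $\sum_a m_a\,g(ac)=M\,g(c)$ for all $c\in(\Z/q\Z)^{*}$, where $M=\sum_a m_a$. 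Expanding $g=\sum_\chi\hat g(\chi)\chi$ over the Dirichlet characters mod $q$ and comparing $\chi$-components gives $\hat g(\chi)\bigl(\sum_a m_a\chi(a)-M\bigr)=0$ for every $\chi$. Now $g$ is an even function, so $\hat g(\chi)=0$ for odd $\chi$; one computes $\hat g(\chi_0)=(\log p)/\phi(q)\neq0$; and for even $\chi\neq\chi_0$, collapsing the product defining $g$ over the residues modulo the conductor shows $\hat g(\chi)$ is a nonzero multiple of $L(1,\chi^{*})$ for the primitive character $\chi^{*}$ inducing $\chi$, which is nonzero by Dirichlet's non-vanishing theorem. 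Hence $\sum_a m_a\chi(a)=M$ for every even $\chi$, and (after symmetrizing $m_a$ into an even function on $(\Z/q\Z)^{*}$) $\sum_a m_a\chi(a)=0$ for odd $\chi$; Fourier inversion then forces $m_a=0$ whenever $a\not\equiv\pm1$ and $M=2m_1=0$, so all $m_a=0$.

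The genuinely routine content is the setup and the first two steps; the real obstacle is the core claim, and inside it the non-vanishing $\hat g(\chi)\neq0$ for even $\chi\neq\chi_0$ — that is, $L(1,\chi)\neq0$, the single analytic ingredient that cannot be circumvented. (This is the same fact underlying Kummer's theorem that the cyclotomic units have finite index $h^{+}$ in the unit group of $\Q(\zeta_q)^{+}$, which would give an alternative, less computational route to the core claim.) Apart from this, one only has to carry along the harmless sign/root-of-unity ambiguities and verify the elementary product identity that reduces the imprimitive $\hat g(\chi)$ to a primitive $L$-value.
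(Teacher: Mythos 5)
Your argument is correct, but note that the paper does not actually prove this proposition: it states it as a result of Chatterjee and Gun and defers entirely to the citation \cite{TCS}, so there is no in-paper proof to match step for step. Your write-up is essentially a self-contained reconstruction of the standard argument that the cited source relies on. The two outer reductions are sound and genuinely routine: the $P$-adic valuation at the (totally ramified) prime above $p_i$ kills the exponents of the non-units $1-\zeta_{q_i}$ because the ratios $\frac{1-\zeta_{q_i}^a}{1-\zeta_{q_i}}$ are units for prime-power conductor and each $1-\zeta_{q_{i'}}$, $i'\neq i$, is a $p_i$-adic unit; and the relative norm $N_{K/K_{i_0}}$ correctly annihilates the units coming from the other prime powers (their $\Q$-norms are $1$) while raising the $q_{i_0}$-units to the nonzero power $[K:K_{i_0}]$. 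The core claim is then exactly the classical independence of the cyclotomic units of $\Q(\zeta_{p^m})$, and your character-sum computation is right: $\hat g$ vanishes on odd characters by evenness of $x\mapsto\log|1-\zeta_q^x|$, equals $(\log p)/\phi(q)$ at $\chi_0$, and for even $\chi\neq\chi_0$ the collapse $\prod_{x\equiv y\ (f)}(1-\zeta_q^x)=1-\zeta_f^y$ reduces it to a nonzero multiple of $L(1,\chi^*)\neq0$; the final symmetrization-and-inversion step does force all exponents to vanish. You correctly isolate $L(1,\chi)\neq0$ (equivalently, the finite index of the cyclotomic units, which is what \cite{TCS} and the references \cite{bass}, \cite{ennola} ultimately invoke) as the one non-elementary ingredient. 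The only blemish is notational: writing ``$M=2m_1=0$'' is misleading since there is no $m_1$ in the relation; what your symmetrized function really encodes is the value $-M$ placed at $a\equiv\pm1$, and Fourier inversion gives $m_a=0$ for all $a$ together with $M=0$. This does not affect correctness.
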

(For a proof  see \cite{TCS}).  

  \section{\bf \textcolor{red}{Transcendence of Harmonic Numbers}}

 In 2007, Murty and Saradha, (see \cite{MS1})  proved that for $q>1,$  $\psi(a/q) + \gamma$ is transcendental for any $1\leq a\leq q-1$ 
 and hence we can easily deduce that the harmonic number $H_{a/q}$  at the rational arguments is transcendental, whenever $q$ does 
 not divide $a.$ On the other hand, if $q$ divides $a,$   $H_{a/q}$ is a rational  number. Here we are giving  another proof 
 of transcendental nature of harmonic numbers at rational arguments.
  \begin{thm}For $q>1$ and  $a \in \Z,$ $H_{a/q}$ is transcendental, whenever $q$ does not divide $a.$
  \end{thm}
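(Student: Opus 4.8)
The plan is to reduce the transcendence of $H_{a/q}$ to the transcendence statements already packaged in Propositions~2.2 and~2.3. First I would assume, without loss of generality, that $1 \le a \le q-1$ and $(a,q)=1$; indeed, writing $a/q$ in lowest terms as $a'/q'$ changes neither the value $H_{a/q}$ nor the hypothesis that $q\nmid a$ (which becomes $q'>1$), and the recurrence $H_r = H_{r-1}+1/r$ lets me strip off the integer part of $a/q$ at the cost of adding a rational number, which does not affect transcendence. So it suffices to show $H_{a/q}$ is transcendental for $(a,q)=1$, $1\le a\le q-1$, $q>1$.

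Next I would invoke the Gauss-type identity~\eqref{x}, which expresses
$$H_{a/q} = \frac{q}{a} - \log(2q) - \frac{\pi}{2}\cot(\pi a/q) + 2\sum_{n=1}^{\lfloor (q-1)/2\rfloor}\cos\!\Big(\frac{2\pi n a}{q}\Big)\log\sin(\pi n/q).$$
The term $\frac{q}{a}$ is rational, and $\cos(2\pi na/q)$ is an algebraic number for every $n$. The quantity $\cot(\pi a/q)$ is algebraic (it is a rational function of $\zeta_q=e^{2\pi i a/q}$), so $-\frac{\pi}{2}\cot(\pi a/q)$ has the form $c_0\pi$ with $c_0$ algebraic; the coefficient $c_0$ is nonzero precisely when $\cot(\pi a/q)\ne 0$, i.e. when $a/q \ne 1/2$. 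Each $\sin(\pi n/q)$ for $1\le n\le \lfloor(q-1)/2\rfloor$ is a positive algebraic number, and $\log(2q)=\log 2 + \log q$ is a sum of logarithms of positive rationals. Thus $H_{a/q}$ has exactly the shape $c_0\pi + \sum_j c_j\log\alpha_j$ with the $\alpha_j$ positive algebraic and the $c_j$ algebraic, and Proposition~2.2 gives transcendence as soon as $c_0\ne 0$.

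The main obstacle is therefore the exceptional case $c_0 = 0$, which happens exactly when $q=2$, $a=1$, or more generally when $\cot(\pi a/q)=0$; for $(a,q)=1$ with $1\le a\le q-1$ this forces $a/q=1/2$, so $q=2$, $a=1$. In that single case $H_{1/2} = 2 - \log 4 = 2 - 2\log 2$, which is transcendental because $\log 2$ is transcendental by the Lindemann–Weierstrass theorem (or by Baker's Proposition~2.1 applied to the single algebraic number $2$). One should also double-check that the generic argument does not secretly need $c_0\ne 0$ to be a genuine obstruction: when $a/q\ne 1/2$ we must confirm $\cot(\pi a/q)\ne 0$, which is immediate since $\cos(\pi a/q)=0$ would force $a/q=1/2$.

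A cleaner alternative, avoiding the case split, is to pass to $\psi(a/q)+\gamma = H_{a/q}$ directly and cite Proposition~2.3: rewrite~\eqref{0001} using that $\sin(\pi n/q) = \frac{|1-\zeta_q^n|}{2}$ so that the logarithmic part becomes an algebraic combination of $\log|1-\zeta_q^n|$, where $1-\zeta_q^n$ (suitably normalized) are units in the cyclotomic field $\Q(\zeta_q)$ of degree $>1$, together with a $\log r$ term coming from the powers of $2$ and $q$; Proposition~2.3 then yields transcendence regardless of whether the $\pi$-coefficient vanishes, since it allows $c_0=0$ as long as the unit or $\log r$ part is nontrivial. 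I would present the first (elementary) argument as the main proof and remark on the second, since the first only needs Proposition~2.2 plus the trivial handling of $H_{1/2}$.
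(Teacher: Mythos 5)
Your main argument is correct and is essentially the paper's own proof: apply the Gauss identity \eqref{x}, invoke Proposition 2.2 when $\cot(\pi a/q)\neq 0$, treat the single exceptional value $H_{1/2}=2-2\log 2$ separately, and reduce general $a\in\Z$ with $q\nmid a$ to $1\le a\le q-1$ via the recurrence. One caveat on your proposed \emph{cleaner alternative}: Proposition 2.3 as stated in the paper also hypothesizes $c_0\neq 0$, so it does not in fact let you bypass the vanishing of the $\pi$-coefficient, and the $a/q=1/2$ case would still need to be handled by hand.
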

  \begin{proof} We will first consider the case when $1 \leq a \leq q-1.$ Using $\eqref{x},$ we have
  \begin{equation*}
 H_{a/q}- \frac{q}{a} \hspace{.1cm}  = \hspace{.1cm} -\log (2q) - \frac{\pi}{2} \cot(\pi a/q) + 2\sum_{n=1} ^{\lfloor \frac{q-1}{2} \rfloor}    \cos (\frac{2 \pi n a}{q})  \log \sin(\pi n/q).
 \end{equation*}
  Since the right hand side is transcendental for $ \cot(\pi a/q) \neq 0$  by using proposition 2.2, we conclude that the left hand side is transcendental for $a/q \neq 1/2.$ Also, 
  note that for $a/q=1/2, $ $H_{a/q} =2-2 \log 2,$ a transcendental number. 
 Again, observe that  for any $a \in \Z $ and   $n \in \N,$ we have
  \begin{equation*}
  H_{n+a/q}= H_{a/q} + \sum_{k=1}^n \frac{1}{(k+a/q)}
  \end{equation*}
   where $q$ does not divide $a,$ and hence $H_{a/q}$ is transcendental. This completes the proof.
  \end{proof}
   In the next theorem, we will show that the harmonic numbers are distinct in nature.
\begin{thm} For all $q>1,$  \hspace{.06cm} all the numbers in the collection
$$ T=\{ H_{a/q}\hspace{.1cm}  | \hspace{.1cm}  a \in \Z \hspace{.1cm} \text{ and } \hspace{.1cm}  (a,q)=1 \}$$
 are distinct.
\end{thm}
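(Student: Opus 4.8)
The plan is to suppose, towards a contradiction, that $H_{a/q}=H_{b/q}$ for integers $a\neq b$ with $(a,q)=(b,q)=1$, and then to pin down the difference $H_{a/q}-H_{b/q}$ precisely enough to see it cannot be $0$. I would split into two cases according to whether $a\equiv b\pmod q$.

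If $a\equiv b\pmod q$, then iterating the recurrence $H_{r}=H_{r-1}+\frac1r$ along the arithmetic progression joining $a/q$ and $b/q$ gives
\begin{equation*}
H_{a/q}-H_{b/q}=\sum_{k}\frac{q}{a+kq},
\end{equation*}
a finite sum of non-zero rationals taken over a block of consecutive integers $k$. This quantity is a non-zero rational number (for instance, when $a$ and $b$ are positive every summand is positive), so $H_{a/q}\neq H_{b/q}$, a contradiction.

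If instead $a\not\equiv b\pmod q$, I would invoke \eqref{x}. After replacing $a$ and $b$ by their residues modulo $q$ — which alters $H_{a/q}$ and $H_{b/q}$ only by rationals of the kind above, since $\cot(\pi a/q)$ and $\cos(2\pi na/q)$ are periodic in $a$ modulo $q$ — and noting that the two $-\log(2q)$ terms cancel, one obtains
\begin{equation*}
H_{a/q}-H_{b/q}=\rho-\frac{\pi}{2}\Bigl(\cot\frac{\pi a}{q}-\cot\frac{\pi b}{q}\Bigr)+2\sum_{n=1}^{\lfloor(q-1)/2\rfloor}\Bigl(\cos\frac{2\pi na}{q}-\cos\frac{2\pi nb}{q}\Bigr)\log\sin\frac{\pi n}{q}
\end{equation*}
for some $\rho\in\Q$. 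Here $\sin(\pi n/q)$ is a positive real algebraic number for $1\leq n\leq\lfloor(q-1)/2\rfloor$ and all the coefficients occurring are algebraic; crucially, the coefficient of $\pi$, namely $-\tfrac12\bigl(\cot(\pi a/q)-\cot(\pi b/q)\bigr)$, is non-zero precisely because $t\mapsto\cot(\pi t)$ has period $1$ whereas $a/q\not\equiv b/q\pmod 1$. Thus the right-hand side has the shape $\rho+c_{0}\pi+\sum_{n}c_{n}\log\alpha_{n}$ with $c_{0}\neq0$, the $c_{n}$ algebraic and the $\alpha_{n}$ positive algebraic, so Proposition 2.2 shows that $H_{a/q}-H_{b/q}-\rho$, hence $H_{a/q}-H_{b/q}$, is transcendental; in particular it is non-zero, again a contradiction. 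As both cases are impossible, all members of $T$ are distinct.

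Setting aside the elementary first case, the main obstacle is the second: one must check that reducing $a$ modulo $q$ leaves only a harmless rational correction, that the $\log(2q)$ contributions really do cancel, and — the essential point — that the coefficient of $\pi$ in the resulting expression does not vanish, so that Proposition 2.2 is applicable. Once the difference is written as an algebraic linear combination of $\pi$ and logarithms of positive algebraic numbers with non-zero $\pi$-coefficient (plus a rational constant), its transcendence, and hence the asserted distinctness, is immediate.
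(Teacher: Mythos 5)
You take a genuinely different route from the paper: the paper's proof is one line, observing that $H_r'=\sum_{k\ge 1}(r+k)^{-2}>0$ and concluding that $H_r$ is strictly increasing in the real variable $r$, while you argue through the recurrence in the congruent case and through the Gauss formula together with Proposition 2.2 in the incongruent case. Your second case is sound: reducing $a$ and $b$ modulo $q$ changes each harmonic number only by a rational (the cotangent and cosine coefficients being periodic), the two $-\log(2q)$ terms cancel, and since $\cot(\pi t)$ is strictly decreasing --- not merely periodic --- on $(0,1)$, the coefficient of $\pi$ is non-zero, so Proposition 2.2 makes the difference transcendental and in particular non-zero.

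The gap is in your first case. The finite sum $\sum_k \frac{q}{a+kq}$ consists of non-zero rationals, but it can vanish when the block of denominators is symmetric about the origin. Concretely, take $q=2$, $a=1$, $b=-3$, both coprime to $2$: the recurrence gives $H_{1/2}-H_{-3/2}=\frac{2}{1}+\frac{2}{-1}=0$, and indeed $H_{1/2}=H_{-3/2}=2-2\log 2$. Your parenthetical justification covers only positive $a$ and $b$, and no justification is possible in general, because this example shows the statement itself fails once $a$ is allowed to range over all of $\Z$. (The paper's own proof has the same blind spot: $H_r'>0$ yields monotonicity only on each interval between consecutive poles, and $H_r$ sweeps out all of $\R$ on each such interval, so values on different branches can and do coincide.) Both your Case 1 and the theorem become correct if one restricts to $a\ge 1$, or more generally to arguments of a fixed sign; as stated, this step fails.
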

\begin{proof} Note that for any real number $r,$ we have

$$H_r' =\sum_{k=1}^\infty{1\over (r+k)^2} >0$$
except for the negative integers, where $H_r$ is a function of real variable $r$. Thus, $H_r$ is a strictly  increasing function of $r.$ Hence,  elements of  $T$ are distinct.
\end{proof}

 Murty and Saradha also proved the linear independence of $\psi(a/q) + \gamma $ over an algebraic number field where the $q$-th cyclotomic polynomial is irreducible. 
 In our next theorem, we will make some remarks about the linear independence of harmonic numbers $H_{a/q},$ over the field of algebraic numbers.

    First, we prove the following two lemmas that will play a crucial role in proving our theorem for the linear independence of harmonic numbers $H_{a/q}$  over $\overline{\Q}.$

\begin{lem} Let $ n $ be a positive integer such that $n >1 $ and $k $ be an odd positive integer where  $k<2^n.$ Then,\\
 \begin{equation}\label{7}
 \sin(\frac{k \pi}{2^n})= \frac{1}{2} \sqrt{2\pm \sqrt{2\pm...\pm\sqrt2}}
 \end{equation}
 where $\sqrt{\ }$ \  is repeated $  n-1$ times and $\pm $ depends upon the value of $k$.\\
\end{lem}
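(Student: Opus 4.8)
The plan is to argue by induction on $n$, the engine being the half‑angle identity $4\sin^{2}(x/2)=2-2\cos x$. For the base case $n=2$ the only admissible values are $k=1,3$, and $\sin(\pi/4)=\sin(3\pi/4)=\tfrac1{\sqrt2}=\tfrac12\sqrt2$, which is exactly the asserted expression with $n-1=1$ nested radical.

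For the inductive step, suppose the identity holds at level $n$ (so $n\ge 2$) and fix an odd $k$ with $0<k<2^{n+1}$. Since $\tfrac{k\pi}{2^{n+1}}\in(0,\pi)$ we have $\sin\!\big(\tfrac{k\pi}{2^{n+1}}\big)>0$, so the half‑angle identity yields
\[
2\sin\!\Big(\frac{k\pi}{2^{n+1}}\Big)=\sqrt{\,2-2\cos\!\Big(\frac{k\pi}{2^{n}}\Big)\,}.
\]
It therefore suffices to show that $2\cos\!\big(\tfrac{k\pi}{2^{n}}\big)$ is $\pm$ a nested radical of the prescribed shape with exactly $n-1$ square roots; substituting this into the display produces the desired expression with $n$ square roots at level $n+1$. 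To extract such a radical for the cosine I would write $\cos\!\big(\tfrac{k\pi}{2^n}\big)=\sin\!\big(\tfrac{\pi}{2}-\tfrac{k\pi}{2^n}\big)$ and fold the argument back into $(0,\pi)$: if $1\le k\le 2^n-1$ then $\cos\!\big(\tfrac{k\pi}{2^n}\big)=\pm\sin\!\big(\tfrac{k'\pi}{2^n}\big)$ with $k'=|2^{n-1}-k|$, and the sign matches the sign of $\cos\!\big(\tfrac{k\pi}{2^n}\big)$, while if $2^n+1\le k\le 2^{n+1}-1$ then $k=2^n+j$ and $\cos\!\big(\tfrac{k\pi}{2^n}\big)=-\cos\!\big(\tfrac{j\pi}{2^n}\big)$, which reduces to the previous case. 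In each instance the reduced index ($k'$ or $j$) is again odd — because $2^{n-1}$ and $2^{n}$ are even for $n\ge2$ — and lies strictly between $0$ and $2^n$, so the induction hypothesis applies to it at level $n$ and supplies the required $n-1$ nested radicals. The ambiguous signs are then pinned down level by level: the outermost $\pm$ records the sign of $\cos\!\big(\tfrac{k\pi}{2^n}\big)$, i.e.\ which half of $(0,\pi)$ (respectively $(\pi,2\pi)$) the angle $\tfrac{k\pi}{2^n}$ lies in, and the inner signs are inherited recursively; this is the precise meaning of ``$\pm$ depends upon the value of $k$''.

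I do not anticipate any analytic obstacle: the entire content is the half‑angle recursion together with the elementary symmetries of sine and cosine. The one thing genuinely requiring care — and really the only point where the argument can go wrong — is the bookkeeping: checking after each folding that the new index stays odd and within the range in which the induction hypothesis is legitimately available, and tracking the depth of nesting so that precisely one new square root is appended at each step. As a consistency check one can compute $n=3$ by hand: $\sin(\pi/8)=\tfrac12\sqrt{2-\sqrt2}$ and $\sin(3\pi/8)=\tfrac12\sqrt{2+\sqrt2}$, with $\sin(5\pi/8)$ and $\sin(7\pi/8)$ reproducing the same two values.
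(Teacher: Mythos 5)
Your proof is correct and follows essentially the same route as the paper: induction on $n$ driven by the half-angle identity $2\sin^2(x/2)=1-\cos x$, with the range $2^n<k<2^{n+1}$ folded back into the range $1\le k<2^n$. The only difference is cosmetic --- you convert $\cos\bigl(\tfrac{k\pi}{2^n}\bigr)$ into a nested radical by reflecting it to $\pm\sin\bigl(\tfrac{k'\pi}{2^n}\bigr)$ with $k'=|2^{n-1}-k|$ (still odd, still in range) and invoking the induction hypothesis at $k'$, whereas the paper invokes it at the same index $k$ and uses $\cos=\pm\sqrt{1-\sin^2}$; both produce the same radical, and your bookkeeping of the ambiguous signs is if anything the more careful of the two.
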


\begin{proof}
 We will prove the following   lemma by using induction on $n.$
 First suppose that $n=2, \text{ then we have, } 1 \leq k <4$ and it follows trivially. Now we assume that the statement 
 holds true for $m \leq n.$ For the case $m =n+1,$ with $1 \leq k <2^{n+1}$ and $k$ odd,
we use the following  trigonometric formula,
\begin{equation}\label{8}
\sin\frac{k\pi}{2^{n+1}}= \sqrt{\frac{1-\cos\frac{k\pi}{2^n}}{2}}.
\end{equation}
We  first consider the case for $1 \leq k < 2^n.$
Using the identity $\sin^2(x) + \cos^2(x)=1$ and   substituting $\eqref{7}$ in $\eqref{8},$   we have\\
$$ \sin\frac{k\pi}{2^{n+1}}=\sqrt{\frac{1-\frac{1}{2} \sqrt{2\mp \sqrt{2 \pm...\pm\sqrt2}}}{2}}$$
assuming induction hypothesis. Thus,
$$ \sin\frac{k\pi}{2^{n+1}}  = \frac{1}{2} \sqrt{2- \sqrt{2\mp \sqrt{2 \pm...\pm\sqrt2}}}     $$
where $\sqrt{\ }$ \  is repeated $n$ many times. Therefore, it is of the given form.\\
Now for the case  $2^n <k <2^{n+1}$, write $k=\alpha +2^n,$ where $1 \leq \alpha <2^n,$ then
$$ \sin\frac{k\pi}{2^{n+1}}= \cos\frac{\alpha\pi}{2^{n+1}}.$$
Now using the following trigonometric identity for cosine  function
$$ \cos \frac{\alpha \pi}{2^{n+1}}= \sqrt{\frac{1+\cos\frac{\alpha \pi}{2^n}}{2}}$$
and  applying the similar argument for $\alpha$ and $\cos x$ in terms of $\sin x$ as we did in the previous case, we  get the desired result.
\end{proof}

\begin{lem} Let $q=2^n$. Then for any natural number $j$ such that, $\frac{\lfloor \frac{q-1}{2} \rfloor +1}{2} +1 \leq j \leq  \lfloor \frac{q-1}{2} \rfloor$,  
$\log \sin (\frac{j\pi}{q})$ can be written as an algebraic linear combination of $\log \sin (\frac{p\pi}{q}),$ where $1\leq p\leq \frac{\lfloor\frac{q-1}{2}\rfloor+1}{2}.$\\
\end{lem}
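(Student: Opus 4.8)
The plan is to reduce the entire statement to the sine duplication formula. Since $q=2^n$ we have $\lfloor (q-1)/2\rfloor=2^{n-1}-1$ and $\frac{\lfloor (q-1)/2\rfloor+1}{2}=2^{n-2}=q/4$, so the assertion to be proved is that for every integer $j$ with $q/4<j<q/2$ the number $\log\sin(j\pi/q)$ is a $\overline{\Q}$-linear (in fact $\Q$-linear) combination of the numbers $\log\sin(p\pi/q)$ with $1\le p\le q/4$. Three elementary facts will be used: the reflection $\sin(m\pi/q)=\sin((q-m)\pi/q)$; the co-function identity $\cos(j\pi/q)=\sin((q/2-j)\pi/q)$, which together let us keep every argument inside $(0,\pi/2)$; and the observation that $\log 2$ already lies in the target span, since $\sin(\pi/4)=2^{-1/2}$ gives $\log 2=-2\log\sin((q/4)\cdot(\pi/q))$ and $p=q/4=2^{n-2}$ is an admissible index.

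The heart of the argument is a single reduction step. Given $j$ with $q/4<j<q/2$, set $d=q/2-j$, an integer with $1\le d\le q/4-1$. Then $\sin(j\pi/q)=\cos(d\pi/q)=\dfrac{\sin(2d\pi/q)}{2\sin(d\pi/q)}$, and therefore
\begin{equation*}
\log\sin\Big(\frac{j\pi}{q}\Big)=\log\sin\Big(\frac{2d\pi}{q}\Big)-\log 2-\log\sin\Big(\frac{d\pi}{q}\Big).
\end{equation*}
On the right, $\log 2$ is in the target span and $\log\sin(d\pi/q)$ is one of the generators, because $1\le d\le q/4-1<q/4$. The only term that might not yet be in the target span is $\log\sin(2d\pi/q)$, where $2d=q-2j$ satisfies $0<2d<q/2$: if $2d\le q/4$ this term is itself a generator and we are done, while if $q/4<2d<q/2$ then $\sin(2d\pi/q)$ is again an instance of the lemma, and I would then apply the same reduction to it, iterating as needed.

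It remains to show the iteration terminates, which reduces to studying the index map $j\mapsto q-2j$ on $(q/4,q/2)$. Its unique fixed point is $q/3$, which is not an integer because $3\nmid 2^n$, and $|(q-2j)-q/3|=2|j-q/3|$. Hence, writing $j_0,j_1,j_2,\dots$ for the successive indices still requiring reduction (each in $(q/4,q/2)$), one gets $|j_k-q/3|=2^k|j_0-q/3|$; moreover $|j_0-q/3|\ge 1/3$ since $3j_0-q$ is a nonzero integer, while every $j_k\in(q/4,q/2)$ is within distance $q/6$ of $q/3$. Thus $2^k/3\le q/6$, i.e.\ $k\le n-1$, so after at most $n$ applications of the reduction step the running index falls into $[1,q/4]$ and the process stops. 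Unwinding the finite chain of identities then expresses $\log\sin(j\pi/q)$ as an integer linear combination of $\log 2$ and of logarithms $\log\sin(d\pi/q)$ with $1\le d\le q/4$, and replacing $\log 2$ by $-2\log\sin((q/4)\cdot(\pi/q))$ gives the required representation, with coefficients in $\Q$.

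The part demanding the most care is exactly this bookkeeping. One has to verify at each stage that the side index $d=q/2-j$ really stays in $[1,q/4]$; that the only index that can remain ``large'' is $q-2j$, and that it lies strictly between $q/4$ and $q/2$ precisely when a further reduction is called for; and that every sine argument produced along the way stays in $(0,\pi/2)$, so that the identity $\cos\theta=\sin(2\theta)/(2\sin\theta)$ and the folding $m\mapsto q-m$ are all legitimate — in particular no sine appearing in a denominator vanishes. Once these points are checked, the remainder is a routine telescoping computation, and the rational coefficients it produces are more than enough for the $\overline{\Q}$-span statement of the lemma.
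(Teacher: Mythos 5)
Your proof is correct, but it takes a genuinely different route from the paper's. The paper proves this lemma by first establishing the nested-radical expression $\sin(k\pi/2^n)=\frac{1}{2}\sqrt{2\pm\sqrt{2\pm\cdots\pm\sqrt{2}}}$ (its Lemma 3.3), observing that the indices $j$ and $q/2-j$ produce radicals differing only in the outermost sign, and then repeatedly ``rationalizing'' via $\log(2\pm x)=-\log(2\mp x)+\log(4-x^2)$, which strips one square root per step and therefore terminates at $\log 2$ after at most $n-1$ steps. Your reduction step is the same identity in disguise --- $2\sin(j\pi/q)\sin((q/2-j)\pi/q)=\sin((q-2j)\pi/q)$ is exactly what the paper's rationalization computes --- but you reach it directly from the co-function and duplication formulas, so you never need Lemma 3.3 or the induction behind it. The price is that termination is no longer automatic from a decreasing radical depth, and you pay it with the fixed-point/doubling analysis of the index map $j\mapsto q-2j$; that argument is sound ($3\nmid 2^n$ keeps $q/3$ from being an integer, $|j_k-q/3|$ doubles at each step yet stays below $q/6$, forcing the iteration to stop within about $n$ steps), and it is essentially the same device the paper itself deploys later in its Lemma 3.8 for odd moduli prime to $3$. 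The net effect is that your version is more elementary and self-contained, and it makes visible that the power-of-two lemma and the odd-modulus lemma are two instances of one mechanism, whereas the paper's version gets termination for free once the nested radicals have been set up; your observation that $\log 2=-2\log\sin((q/4)\pi/q)$ lies in the target span matches the paper's remark that $\log\sin(\pi/4)=-\frac{1}{2}\log 2$.
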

\begin{proof}First  note that, using lemma 3.3, for any integer $j$ such that  $\frac{\lfloor \frac{q-1}{2} \rfloor +1}{2} +1  \leq j \leq  \lfloor \frac{q-1}{2} \rfloor,$  we have,
$$\sin(\frac{j\pi}{q})=\frac{1}{2} \sqrt{2\pm \sqrt{2\pm....\pm \sqrt2}}$$
if necessary canceling the multiples of 2 from numerator and denominator.
Then, for  $1\leq (\lfloor\frac{q-1}{2} \rfloor -j+1))   \leq \frac{\lfloor\frac{q-1}{2}\rfloor-1}{2}$ we get 
 $$\sin(\frac{\pi}{q}(\lfloor\frac{q-1}{2} \rfloor -j+1))=\frac{1}{2} \sqrt{2\mp \sqrt{2\pm....\pm \sqrt2}}$$\\
 i.e. there is  a change of sign only at the first position. Also, observe that
 $\log(2\pm \sqrt{2\pm....\pm \sqrt2})$ can be written in terms of $\log (2\mp \sqrt{2\pm....\pm \sqrt2})$ by rationalization as
 $$\log(2\pm \sqrt{2\pm....\pm \sqrt2})=- \log(2\mp \sqrt{2\pm....\pm \sqrt2}) + \log(2\mp \sqrt{2\pm....\pm \sqrt2})$$
 where in the last term of right hand side, the number of iterative square roots is one less than that of the previous term.
Again rationalize the second term (if necessary)  in the above equation  and repeat the process (if required)  until we get $ \log 2$ as the last term. Also note that  
$\log \sin\big(\frac{(\lfloor \frac{q-1}{2} \rfloor +1) \pi}{2q}\big)=-\frac{1}{2} \log 2.$  Hence, we get at most $\frac{\lfloor\frac{q-1}{2}\rfloor+1}{2} $ many 
numbers of the set  $\{ \log \sin (\frac{j\pi}{q}), \ 1\leq j \leq  q-1 \}$ which are linearly independent over $\overline{\Q}.$
 \end{proof}

  \textcolor{red}{Next, we will prove the theorem for the linear independence of harmonic numbers $H_{a/q}$ over algebraic numbers.}
\begin{thm}  For any positive integer $q=2^n,  n\geq 2$ at most $\phi(\phi(q)) + 2$ many   numbers  of the set $\{ H_{a/q}, \ 1 \leq a \leq q \} $  
are linearly independent over ${\overline{\Q}.}$
\end{thm}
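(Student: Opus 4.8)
The plan is to show that every $H_{a/q}$ with $1\le a\le q$ lies in the $\overline{\Q}$-span of a single explicit finite set whose span has dimension at most $\phi(\phi(q))+2$; since a set of real numbers contains at most $d$ linearly independent elements exactly when its $\overline{\Q}$-span has dimension $\le d$, this is the whole theorem. Concretely I would fix
$$\mathcal B=\{1,\ \pi\}\ \cup\ \Big\{\log\sin\tfrac{\pi j}{q}\ :\ 1\le j\le\lfloor\tfrac{q-1}{2}\rfloor\Big\},$$
and write $q=2^{n}$, $M=\lfloor(q-1)/2\rfloor=2^{n-1}-1$, noting that $\phi(\phi(q))=\phi(2^{n-1})=2^{n-2}$ because $n\ge 2$.

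First I would reduce to coprime arguments. Given $1\le a\le q$, write $a/q=a'/q'$ in lowest terms, so that $H_{a/q}=H_{a'/q'}$ (they are the same real number), and note $q'\mid 2^{n}$, hence $q'=2^{m}$ with $0\le m\le n$, and $a'\le q'$. If $a=q$ then $q'=1$ and $H_{a/q}=H_1=1$; if $q'=2$ then $a'=1$ and $H_{a/q}=H_{1/2}=2-2\log 2$. In the remaining case $m\ge 2$ one has $(a',q')=1$ and $1\le a'\le q'-1$, so \eqref{x} applied with $q'$ in place of $q$ gives
\begin{equation*}
H_{a/q}=\frac{q'}{a'}-\log(2q')-\frac{\pi}{2}\cot\!\Big(\frac{\pi a'}{q'}\Big)+2\sum_{k=1}^{\lfloor(q'-1)/2\rfloor}\cos\!\Big(\frac{2\pi k a'}{q'}\Big)\log\sin\!\Big(\frac{\pi k}{q'}\Big).
\end{equation*}

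Next I would verify that the right-hand side lies in $\mathrm{span}_{\overline{\Q}}(\mathcal B)$. The numbers $\cot(\pi a'/q')$ and $\cos(2\pi k a'/q')$ are values of $\cos$ and $\sin$ at rational multiples of $\pi$, hence algebraic (for $q'=2^{m}$ with $m\ge 2$ and $a'$ odd the cotangent is finite), and $\log(2q')=(m+1)\log 2$. Since $\sin(\pi k/q')=\sin\!\big(\pi\cdot 2^{n-m}k/q\big)$ and $1\le 2^{n-m}k\le 2^{n-1}-1=M$ whenever $1\le k\le\lfloor(q'-1)/2\rfloor$ and $m<n$, each $\log\sin(\pi k/q')$ already belongs to $\mathcal B$. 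Finally $\log 2=-2\log\sin(\pi/4)=-2\log\sin\!\big(\pi\cdot 2^{n-2}/q\big)$ with $1\le 2^{n-2}\le M$ (here $n\ge 2$ is used), so $\log 2$ also lies in $\mathrm{span}_{\overline{\Q}}(\mathcal B)$. Putting this together with the first step, $H_{a/q}\in\mathrm{span}_{\overline{\Q}}(\mathcal B)$ for every $1\le a\le q$.

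To finish I would invoke Lemma 3.4, which yields $\dim_{\overline{\Q}}\mathrm{span}_{\overline{\Q}}\{\log\sin(\pi j/q):1\le j\le M\}\le\frac{\lfloor(q-1)/2\rfloor+1}{2}=2^{n-2}=\phi(\phi(q))$; adjoining the two numbers $1$ and $\pi$ raises the dimension by at most $2$, so
$$\dim_{\overline{\Q}}\mathrm{span}_{\overline{\Q}}\{H_{a/q}:1\le a\le q\}\ \le\ \dim_{\overline{\Q}}\mathrm{span}_{\overline{\Q}}(\mathcal B)\ \le\ \phi(\phi(q))+2,$$
which is the assertion. The substantive input is Lemmas 3.3 and 3.4 --- the nested-radical formulas for $\sin(k\pi/2^{n})$ and the ensuing $\overline{\Q}$-linear relations among the $\log\sin(j\pi/q)$; everything else is organizational, and I expect the only delicate point to be the verification in the previous paragraph that neither $\log 2$ nor the sines coming from proper divisors $q'$ of $q$ fall outside $\mathcal B$, which is exactly what the inequalities $1\le 2^{n-m}k\le M$ and $1\le 2^{n-2}\le M$ guarantee. (For $q=4$ and $q=8$ the bound $\phi(\phi(q))+2$ is actually attained, so it cannot be lowered in general.)
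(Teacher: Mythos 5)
Your proof is correct and rests on the same two ingredients as the paper's own argument --- the Gauss formula \eqref{x} and Lemma 3.4, which collapses the numbers $\log\sin(j\pi/2^n)$ onto $2^{n-2}=\phi(\phi(q))$ spanning elements --- so it is essentially the same proof, merely phrased as a direct bound on $\dim_{\overline{\Q}}$ of the span of $\{1,\pi\}\cup\{\log\sin(j\pi/q)\}$ rather than as a count of equations versus unknowns in a homogeneous linear system (note that in the upper-bound direction neither formulation actually needs Baker's theorem). If anything your version is slightly more careful at one point: you reduce $a/q$ to lowest terms before applying \eqref{x} and treat $a=q$ and $a=q/2$ separately, whereas the paper substitutes the formula for every $\alpha_i\in\{1,\dots,q\}$ without comment.
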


\begin{proof}First note that $\lfloor \frac{\lfloor \frac{q-1}{2} \rfloor}{2}\rfloor + 3 =\phi(\phi(q)) + 2$ for $q=2^n.$ 
Let $r$ be an integer with $r>  \lfloor \frac{\lfloor \frac{q-1}{2} \rfloor}{2}\rfloor + 3.$ 
Choose any $r$ many numbers of the set $\{ H_{a/q}, \ 1 \leq a \leq q \} $ say    $\{ H_{\alpha_1/q},..., H_{\alpha_r/q}\},$  where $\alpha_i \in \{1,2,...,q  \}$ 
for $1\leq i \leq r$ and consider  the equation
 \begin{equation*} c_1 H_{\alpha_1/q} +...+c_r H_{\alpha_r/q}=0  \hspace{.35cm}\text{ for } c_i \in \overline{\Q}.\end{equation*}
  \textcolor{red}{Substituting the Gauss formula for $H_{a/q}$  from equation \eqref{x} in the above equation, we get}
 \begin{equation*}
 c_1 \Big{(} \frac{q}{\alpha_1} -\log (2q) - \frac{\pi}{2} \cot(\pi \alpha_1/q) + 2\sum_{k=1} ^{\lfloor \frac{q-1}{2} \rfloor}    \cos (\frac{2 \pi k \alpha_1}{q})  
 \log \sin(\pi k/q) \Big ) +...
 \end{equation*}
\begin{equation*}
 +\ c_r  \Big{(} \frac{q}{\alpha_r} -\log (2q) - \frac{\pi}{2} \cot(\pi \alpha_r/q) + 2\sum_{k=1} ^{\lfloor \frac{q-1}{2} \rfloor}    \cos (\frac{2 \pi k \alpha_r}{q})  
 \log \sin(\pi k/q) \Big )=0.
 \end{equation*}
 Again rewriting the above equation we get,
 \begin{equation*}
 \Big( \frac{c_1q}{\alpha_1} +...+\frac{c_rq}{\alpha_r}\Big)- \frac{\pi}{2}\Big(c_1\cot(\pi \alpha_1/q)+...+c_r\cot(\pi \alpha_r/q)\Big) -\log(2q)\Big(c_1+...+c_r \Big) + 
 \end{equation*}
 \begin{equation}\label{y}
 + \  2\sum_{k=1} ^{\lfloor \frac{q-1}{2} \rfloor}  \log \sin(\pi k/q) \Big( c_1 \cos (\frac{2 \pi k \alpha_1}{q})  +...+ c_r\cos (\frac{2 \pi k \alpha_r}{q}) \Big) =0 .
 \end{equation}
 \textcolor{red}{Now by using proposition 2.2,  we must have 
 $$( \frac{c_1q}{\alpha_1} +...+\frac{c_rq}{\alpha_r} )=0,$$
  $$\Big(c_1\cot(\pi \alpha_1/q)+...+c_r\cot(\pi \alpha_r/q)\Big)=0.$$}
Thus, we get two linear homogeneous equations in the variables $c_i^{~,}s$ where $ 1 \leq i \leq r$. 
Hence,  equation $\eqref{y}$ reduces to
  \begin{equation}\label{ab}
  -\log(2q)\Big(c_1+...+c_r \Big) 
 + 2\sum_{k=1} ^{\lfloor \frac{q-1}{2} \rfloor}  \log \sin(\pi k/q) \Big( c_1 \cos (\frac{2 \pi k \alpha_1}{q})  + \ ...+ c_r\cos (\frac{2 \pi k \alpha_r}{q}) \Big) =0 .
 \end{equation}
  
  Now note that  $\log \sin(\frac{(\lfloor \frac{q-1}{2} \rfloor +1 )\pi}{2q})=-\frac{1}{2} \log 2$. Thus, by lemma 3.4, we get a subset $S$ of
  the set $\{ \log \sin(\frac{\pi}{q}),...,\log \sin(\frac{\lfloor \frac{q-1}{2} \rfloor \pi}{q})\}$  with at most $\frac{\lfloor\frac{q-1}{2}\rfloor+1}{2}$ many numbers   
 which are linearly independent over $\overline{\Q}$. Finally rewriting \eqref{ab} in terms of the linear combination of elements of the subset $S$ and using Baker's theory,
 we can get at most  $\frac{\lfloor\frac{q-1}{2}\rfloor+1}{2}$ many linear homogeneous equations in the variables $c_i^{~,}s$ with algebraic coefficients.
  
  So, altogether we get a linear homogeneous system of  at most $\frac{\lfloor\frac{q-1}{2}\rfloor+1}{2}+2$   many  equations in $r$   variables $c_1,...,c_r,$ where
  $r>  \lfloor \frac{\lfloor \frac{q-1}{2} \rfloor}{2}\rfloor + 3=\frac{\lfloor\frac{q-1}{2}\rfloor+1}{2}+2$ with algebraic coefficients. Thus, there exists a 
  non trivial algebraic solution for $c_i^{~,} s$.
 Hence, the set $ \{H_{\alpha_1/q},..., H_{\alpha_r/q}  \}$ must be linearly dependent over $\overline{\Q}.$ This completes the proof.
\end{proof}

Note that the above theorem deals with the linear independence of harmonic numbers $H_{a/q}$ for $ 1\leq a\leq q ,$  where $q=2^n,$ over $\overline{\Q}$.
In our next theorem, we will prove a more general case. For this we need the following lemmas. 
\begin{lem}
For any finite set $J$ of   primes in $\N$ with $ p _i \in J$ and $q_i =p_i^{m_i},$ where $m_i \in \N,$   the numbers  $ 2\sin \frac{k_{j_i} \pi}{q_i}$ where  
$ 1\leq k_{j_i}<q_i/2 ,$   \ $(k_{j_i} ,q_i)=1 $  and $1\leq j_i < q_i/2,$ for all $p_i \in J, $  are multiplicatively independent.
\end{lem}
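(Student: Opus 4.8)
The plan is to reduce the claimed multiplicative independence to Proposition~2.4 by way of the elementary identity relating sines at rational angles to cyclotomic numbers. Write $\zeta_{q_i}$ for a primitive $q_i$-th root of unity. The starting observation is that for $1\le k<q_i/2$ one has $2\sin\frac{k\pi}{q_i}>0$ and
\[
\Bigl(2\sin\tfrac{k\pi}{q_i}\Bigr)^{2}=\bigl|1-\zeta_{q_i}^{\,k}\bigr|^{2}=(1-\zeta_{q_i}^{\,k})(1-\zeta_{q_i}^{-k}).
\]
Using $1-\zeta_{q_i}^{-k}=-\zeta_{q_i}^{-k}(1-\zeta_{q_i}^{\,k})$ and, when $(k,q_i)=1$, the factorisation $1-\zeta_{q_i}^{\,k}=(1-\zeta_{q_i})\,w_{k,i}$ with $w_{k,i}:=\tfrac{1-\zeta_{q_i}^{\,k}}{1-\zeta_{q_i}}$ a cyclotomic unit (and $w_{1,i}=1$), this becomes
\[
\Bigl(2\sin\tfrac{k\pi}{q_i}\Bigr)^{2}=-\zeta_{q_i}^{-k}\,(1-\zeta_{q_i})^{2}\,w_{k,i}^{2},
\]
where for $1<k<q_i/2$ with $(k,q_i)=1$ the unit $w_{k,i}$ is, up to the labelling of the $a_{j_i}$, one of the numbers occurring in Proposition~2.4.

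I would then feed a hypothetical relation into this identity. Suppose $\prod_{i}\prod_{k}\bigl(2\sin\tfrac{k\pi}{q_i}\bigr)^{n_{k,i}}=1$ with $n_{k,i}\in\Z$, the inner product running over $k$ with $1\le k<q_i/2$ and $(k,q_i)=1$. Squaring and substituting the identity above, everything now lives in $\Q(\zeta_{Q})$ with $Q=\prod_{p_i\in J}q_i$, and one obtains
\[
\eta\cdot\prod_{i}(1-\zeta_{q_i})^{2N_i}\cdot\prod_{i}\prod_{k>1}w_{k,i}^{\,2n_{k,i}}=1,
\]
where $N_i:=\sum_{k}n_{k,i}$ and $\eta:=\prod_{i}\prod_{k}(-\zeta_{q_i}^{-k})^{n_{k,i}}$ is a root of unity.

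The relation then separates into a part carrying the ramification at the $p_i$ and a part lying in the unit group. Passing to fractional ideals of $\Q(\zeta_{Q})$, the elements $\eta$ and $w_{k,i}$ are units, whereas $(1-\zeta_{q_i})$ generates a proper ideal all of whose prime factors lie above $p_i$; since the primes $p_i\in J$ are pairwise distinct, these ideals for different $i$ have coprime support, so $\prod_i(1-\zeta_{q_i})^{2N_i}=(1)$ forces $N_i=0$ for every $i$. What remains is $\prod_i\prod_{k>1}w_{k,i}^{\,2n_{k,i}}=\eta^{-1}$, a root of unity; since the $w_{k,i}$ are multiplicatively independent units by Proposition~2.4, comparing inside $\mathcal{O}_{\Q(\zeta_Q)}^{\times}$ modulo its torsion subgroup yields $n_{k,i}=0$ for all $k>1$. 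Finally $n_{1,i}=N_i-\sum_{k>1}n_{k,i}=0$ for each $i$, so every exponent vanishes, which is the assertion.

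The trigonometric identity and the ideal-theoretic step are routine; the point I expect to require care is the handling of the root of unity $\eta$ — namely, verifying that once the factors $(1-\zeta_{q_i})^{2N_i}$ are removed, the surviving identity is genuinely a relation among the units of Proposition~2.4 modulo torsion, so that the proposition really does apply. (One may instead avoid squaring altogether by using $2\sin\tfrac{k\pi}{q_i}=i\,\zeta_{2q_i}^{-k}\,(1-\zeta_{q_i}^{\,k})$ and working in $\Q(\zeta_{2Q})$; the shape of the argument is unchanged.)
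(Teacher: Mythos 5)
Your argument is correct and follows essentially the same route as the paper: both reduce the claim to Proposition~2.4 via the identity $2\sin\frac{k\pi}{q_i}=|1-\zeta_{q_i}^{k}|$ and the factorisation $1-\zeta_{q_i}^{k}=(1-\zeta_{q_i})\cdot\frac{1-\zeta_{q_i}^{k}}{1-\zeta_{q_i}}$. The paper merely asserts the equivalence of the two independence statements and cites the proposition, whereas you supply the details it leaves implicit (isolating the root of unity, separating the $(1-\zeta_{q_i})$ factors by their ideal supports above the distinct $p_i$, and disposing of the torsion by raising the relation to a suitable power), so your write-up is a legitimate fleshing-out of the same proof.
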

\begin{proof}
First note that, $2\sin \frac{k \pi}{q}=\frac{\zeta_{q}^{k/2}-\zeta_{q}^{-k/2}}{i}$ where $\zeta_{q} = e^{\frac{2 \pi i}{q}}$ is a primitive $q$-th root of unity.
\textcolor{red}{Also, observe that the numbers   $ 2\sin \frac{k_{j_i} \pi}{q_i}= |1-\zeta_{q_i}^{k_{j_i}}|$ where  $ 1\leq k_{j_i}<q_i/2 ,$   \ $(k_{j_i} ,q_i)=1 $  and $1\leq j_i < q_i/2,$ for all 
$i$ such that $p_i\in J,$    are multiplicative independent if and only if    $ 1-\zeta_{q_i}, \frac{1- \zeta_{q_i}^{k_{j_i}}}{1-\zeta_{q_i}}$ where 
$1<k_{j_i} <q_i/2,$ \  $ (k_{j_i}, q_i)=1$ and $1\leq j_i < q_i/2, $ \  $ p_i \in J$  are multiplicative independent.} Now the later part  follows from proposition 2.4.
\end{proof}

Note that from the  lemma 3.6,  we can deduce that for any finite set $J$ of odd primes with $p_i \in J$  and $q_i= p_i ^{m_i},$ where $m_i \in \N,$  the numbers
$$\{ \log 2,\   \log \sin \frac{k_{j_i}\pi}{q_i}\  |  \ \ 1\leq k_{j_i} <q_i/2, \  (k_{j_i}, q_i)=1  \text{ and } 1 \leq j_i <q_i/2, \  \  \forall  p_i \in J \}    $$
are linearly independent over $\Q$ and hence over $\overline{\Q} $  by using Baker's theorem.

\textcolor{red}{
\begin{lem}
For any odd number $q,$ let $\alpha=\frac{q-1}{2}.$ Then,
$$\sin (\frac{\pi}{q}) \sin( \frac{2 \pi}{q})...\sin( \frac{\alpha \pi}{q})= \frac{\sqrt q}{2^{\alpha}}.$$
\end{lem}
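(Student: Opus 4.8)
The plan is to derive the identity from the classical evaluation of the full sine product $\prod_{k=1}^{n-1}\sin(k\pi/n)$ together with the symmetry $\sin\big((q-k)\pi/q\big)=\sin(k\pi/q)$. First I would establish the auxiliary fact that for any integer $n\ge 2$,
$$\prod_{k=1}^{n-1}\sin\!\Big(\frac{k\pi}{n}\Big)=\frac{n}{2^{n-1}}.$$
To see this, factor $z^n-1=\prod_{k=0}^{n-1}(z-\zeta^k)$ with $\zeta=e^{2\pi i/n}$, divide by $(z-1)$, and let $z\to 1$ to obtain $n=\prod_{k=1}^{n-1}(1-\zeta^k)$. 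Taking absolute values and using $|1-e^{i\theta}|=2|\sin(\theta/2)|$, hence $|1-\zeta^k|=2\sin(k\pi/n)$ for $1\le k\le n-1$, gives $n=2^{n-1}\prod_{k=1}^{n-1}\sin(k\pi/n)$, which is the claim.

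Next I would specialize to $n=q$ with $q$ odd and write $\alpha=\frac{q-1}{2}$, so $q-1=2\alpha$. Splitting the product $\prod_{k=1}^{q-1}\sin(k\pi/q)$ at the midpoint and pairing the factor indexed by $k$ with the factor indexed by $q-k$ (these are equal since $\sin((q-k)\pi/q)=\sin(k\pi/q)$), one gets
$$\prod_{k=1}^{q-1}\sin\!\Big(\frac{k\pi}{q}\Big)=\left(\prod_{k=1}^{\alpha}\sin\!\Big(\frac{k\pi}{q}\Big)\right)^{2}.$$
Combining with the auxiliary fact, $\left(\prod_{k=1}^{\alpha}\sin(k\pi/q)\right)^{2}=\dfrac{q}{2^{q-1}}=\dfrac{q}{2^{2\alpha}}$.

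Finally I would take square roots. Since every factor $\sin(k\pi/q)$ with $1\le k\le \alpha<q$ is strictly positive, the product on the left is positive, so
$$\prod_{k=1}^{\alpha}\sin\!\Big(\frac{k\pi}{q}\Big)=\frac{\sqrt q}{2^{\alpha}},$$
which is exactly the assertion. The only delicate point is the auxiliary product formula; once that is in hand, the symmetry argument and the positivity needed to choose the correct square root are routine. I do not anticipate any real obstacle here, as the whole argument is elementary.
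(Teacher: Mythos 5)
Your proof is correct, and its skeleton is the same as the paper's: both reduce the lemma to the full product $\prod_{k=1}^{q-1}\sin(k\pi/q)=q/2^{q-1}$ and then halve it using the symmetry $\sin((q-k)\pi/q)=\sin(k\pi/q)$ for odd $q$. The difference lies in how that auxiliary formula is obtained. The paper quotes the multiple-angle identity $\sin(qx)=2^{q-1}\prod_{k=0}^{q-1}\sin(x+k\pi/q)$, divides by $\sin x$, and lets $x\to 0$; you instead factor $z^q-1=\prod_{k=0}^{q-1}(z-\zeta^k)$, cancel $(z-1)$, let $z\to 1$ to get $q=\prod_{k=1}^{q-1}(1-\zeta^k)$, and take absolute values via $|1-e^{i\theta}|=2|\sin(\theta/2)|$. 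Your route is the more self-contained of the two, since the multiple-angle product the paper invokes is itself usually proved by exactly your roots-of-unity argument. You are also more explicit than the paper about the final step: the paper compresses the pairing $k\leftrightarrow q-k$ and the extraction of the positive square root into the single sentence ``since $q$ is odd, we get the desired result,'' whereas you justify the sign choice by noting that each factor $\sin(k\pi/q)$ with $1\le k\le\alpha$ is positive. No gaps.
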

\begin{proof}
To prove lemma 3.7, we need the following multiple-angle formula for sine function that is for any odd positive integer $q$ and real number $x$, we have
$$\sin (qx)=2^{q-1} \prod_{k=0}^{q-1} \sin (x+ \frac{k\pi}{q}).$$
Dividing the above equation by $\sin x$ and letting $x \rightarrow 0,$ we  get 
$$q=2^{q-1} \prod_{k=1}^{q-1} \sin ( \frac{k\pi}{q}).$$
Since $q$ is an odd integer , we will get the desired result.
\end{proof}}

Note that from the above lemma,  for any odd prime $q$ with $\alpha = \frac{q-1}{2},$ we have 
\begin{equation}\label{v}
\log q=2\Big(\log \sin(\frac{\pi}{q}) +... + \log  \sin(\frac{\alpha\pi}{q}) + \alpha \log 2\Big).
\end{equation}

We also need the following technical lemma to prove our theorem. Here is the statement of the lemma.

\begin{lem} Let $q $ be an odd integer such that $ 3 \nmid q$. Then for any 
 natural number $j$ such that, $\frac{q+1}{2}  \leq j \leq q-1,$  \ $\log \sin (\frac{j\pi}{2q})$ can be written as an algebraic linear combination of 
 $\log \sin (\frac{k\pi}{2q})$ and $\log 2$,  for $1\leq k\leq \frac{q-1}{2} .$
\end{lem}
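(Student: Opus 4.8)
The plan is to argue exactly as in Lemma 3.4, but with the base $2q$ (for $q$ odd, $3\nmid q$) replacing $2^n$, using the angle-doubling/halving identities together with a reflection $\sin\theta=\sin(\pi-\theta)$ to fold the ``upper half'' of the indices into the ``lower half''. First I would record the two elementary trigonometric facts I will lean on: for $\tfrac{q+1}{2}\le j\le q-1$ the complementary index $j'=q-j$ satisfies $1\le j'\le\tfrac{q-1}{2}$ and
$$\sin\!\Big(\frac{j\pi}{2q}\Big)=\cos\!\Big(\frac{(q-j)\pi}{2q}\Big)=\cos\!\Big(\frac{j'\pi}{2q}\Big),$$
so it suffices to express $\log\cos(\tfrac{j'\pi}{2q})$ as an algebraic linear combination of the $\log\sin(\tfrac{k\pi}{2q})$ with $1\le k\le\tfrac{q-1}{2}$ and of $\log 2$. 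The route to that is the double-angle identity
$$2\sin\!\Big(\frac{j'\pi}{2q}\Big)\cos\!\Big(\frac{j'\pi}{2q}\Big)=\sin\!\Big(\frac{j'\pi}{q}\Big),$$
which gives
$$\log\cos\!\Big(\frac{j'\pi}{2q}\Big)=\log\sin\!\Big(\frac{j'\pi}{q}\Big)-\log\sin\!\Big(\frac{j'\pi}{2q}\Big)-\log 2.$$

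The second term on the right is already of the allowed shape since $1\le j'\le\tfrac{q-1}{2}$. So everything reduces to showing that $\log\sin(\tfrac{j'\pi}{q})$, for $1\le j'\le\tfrac{q-1}{2}$, is an algebraic linear combination of the $\log\sin(\tfrac{k\pi}{2q})$, $1\le k\le\tfrac{q-1}{2}$, together with $\log 2$ (and, harmlessly, $\log q$, which by \eqref{v} is itself such a combination). Here the hypothesis $3\nmid q$ enters: I would again use the double-angle identity $\sin(\tfrac{j'\pi}{q})=2\sin(\tfrac{j'\pi}{2q})\cos(\tfrac{j'\pi}{2q})$ and then rewrite $\cos(\tfrac{j'\pi}{2q})=\sin(\tfrac{(q-j')\pi}{2q})$; the index $q-j'$ runs over $\tfrac{q+1}{2}\le q-j'\le q-1$, so on its own this is circular. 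The fix is to iterate the folding: writing $2q$ in place of $q$ one level up, or more cleanly, noting $\sin(\tfrac{j'\pi}{q})$ with $2j'<q$ can be pushed down to arguments with denominator $q$ only, and $\{\log\sin(\tfrac{k\pi}{q}):1\le k\le\tfrac{q-1}{2}\}$ is tied to $\{\log\sin(\tfrac{k\pi}{2q})\}$ by the relation $\sin(\tfrac{2k\pi}{2q})=\sin(\tfrac{k\pi}{q})$ and the reflection. I would organize this as a downward induction on the index $j$, at each step using $\sin(\tfrac{j\pi}{2q})=\cos(\tfrac{(q-j)\pi}{2q})$ and $2\sin\theta\cos\theta=\sin2\theta$ to replace $j$ by the strictly smaller $2(q-j)\bmod 2q$ (reduced into range via reflection), picking up only $\log 2$ and previously-handled terms; the condition $3\nmid q$ guarantees this descent never stalls at a fixed point, i.e. $2(q-j)\not\equiv \pm j\pmod{2q}$, equivalently $j$ never equals $\tfrac{2q}{3}$.

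The key steps, in order, are: (1) reduce to expressing $\log\cos(\tfrac{j'\pi}{2q})$ for $1\le j'\le\tfrac{q-1}{2}$ via the complement $j'=q-j$; (2) apply the double-angle identity to trade $\log\cos(\tfrac{j'\pi}{2q})$ for $\log\sin(\tfrac{j'\pi}{q})$ modulo $\log\sin(\tfrac{j'\pi}{2q})$ and $\log 2$; (3) set up the downward induction on the index and use $\sin(\tfrac{j\pi}{2q})=\sin(\tfrac{(2q-j)\pi}{2q})$, angle-doubling, and (where a denominator $q$ appears) Lemma 3.7 in the form \eqref{v} to express $\log q$, so that each reduction produces only admissible terms; (4) verify the base cases $1\le j\le\tfrac{q-1}{2}$ are trivial and that the induction terminates, which is exactly where $3\nmid q$ is used. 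The main obstacle is step (3): one must be careful that the index reduction $j\mapsto 2(q-j)\bmod 2q$ (followed, if it exceeds $q$, by $j\mapsto 2q-j$) strictly decreases a suitable measure of the index — for instance the distance of $j$ from the nearest multiple of $q$ — and that $3\nmid q$ precisely rules out the unique possible cycle; bookkeeping the accumulated $\log 2$ coefficients and confirming no $\log$ of a ``new'' sine-value outside the target set ever appears is the delicate part, but it is bounded and routine once the descent is set up.
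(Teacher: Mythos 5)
Your reduction step is, after unwinding, identical to the paper's: writing $j'=q-j$, your chain $\sin(\tfrac{j\pi}{2q})=\cos(\tfrac{j'\pi}{2q})=\sin(\tfrac{2j'\pi}{2q})/(2\sin(\tfrac{j'\pi}{2q}))$ yields exactly the relation $\log\sin(\tfrac{j\pi}{2q})=\log\sin(\tfrac{(2q-2j)\pi}{2q})-\log\sin(\tfrac{(q-j)\pi}{2q})-\log 2$ that the paper derives from the product-to-sum formula, together with the same index recursion $j\mapsto 2q-2j$. (Your detour through $\log\sin(\tfrac{j'\pi}{q})$, $\log q$ and Lemma~3.7 is unnecessary, since $\sin(\tfrac{j'\pi}{q})=\sin(\tfrac{2j'\pi}{2q})$ is already a term with denominator $2q$ and index $2j'=2q-2j$; likewise the extra reflection ``if it exceeds $q$'' never triggers, because $2q-2j\in[2,q-1]$ for $j$ in the given range.)

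The gap is in the termination of the descent, which is the entire content of the lemma and the only place the hypothesis $3\nmid q$ enters. The measure you propose --- the distance of $j$ from the nearest multiple of $q$, i.e.\ $q-j$ for $j\in S'=\{\tfrac{q+1}{2},\dots,q-1\}$ --- is \emph{not} strictly decreasing: take $q=7$, $j=5$; the orbit under $j\mapsto 2q-2j$ is $5\to 4\to 6\to 2$, with successive distances $2,3,1$ before finally landing in $S=\{1,\dots,\tfrac{q-1}{2}\}$. Your fallback, that $3\nmid q$ ``rules out the unique possible cycle,'' presupposes exactly what must be proved, namely that the fixed point $j=2q/3$ is the only way an orbit can remain in $S'$ forever; a priori longer cycles could exist. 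The paper closes this by a nested-interval argument: the set of starting points whose orbit stays in $S'$ for $n$ steps is an interval of length $\tfrac{q-3}{2^{n+1}}$, which eventually contains at most one integer, forcing that integer to be the fixed point $2q/3\notin\Z$. A cleaner route, if you prefer: since $j_{n+1}-\tfrac{2q}{3}=-2\big(j_n-\tfrac{2q}{3}\big)$, the quantity $|j_n-\tfrac{2q}{3}|$ doubles at each step and, because $3\nmid q$, starts at a value at least $\tfrac13$; it therefore eventually exceeds $\max_{j\in S'}|j-\tfrac{2q}{3}|=\tfrac{q-3}{3}$, so the orbit must exit $S'$. Without some such argument the proof is incomplete at its crucial point.
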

\begin{proof}Consider $S= \{1,2,...,\frac{q-1}{2} \}$ and $S'=\{ \frac{q+1}{2} ,...,q-1 \}.$
Let $j \in S'$  and put  $j_1=j.$ Choose an element $r$ such that $r+j=q.$ Since $\frac{q+1}{2}  \leq j \leq q-1,$ hence we get  $1\leq r\leq \frac{q-1}{2} .$
Now consider $$\sin (\frac{j \pi}{2q}) \sin (\frac{r \pi}{2q}) = \frac{1}{2} \big(\cos (\frac{(j-r) \pi}{2q}) - \cos (\frac{(j+r) \pi}{2q})\big)$$
$$=\frac{1}{2} \sin \big(\frac{(q-(j-r)) \pi}{2q}\big),$$
as $\cos (\frac{(j+r) \pi}{2q}) = 0.$
Thus, we get
$$\log\sin (\frac{j \pi}{2q})=   -\log \sin (\frac{r \pi}{2q}) +   \log\sin \big(\frac{(q-(j-r)) \pi}{2q}\big) - \log 2 .$$

Now put $ j_2 = q-(j-r)$ and if $j_2 \in S,$ we are done, else find an $r'$ such that  $j_2 + r' =q$ and repeat the same process for $j_2$ as we did for $j_1.$ 
Again we will get a $j_3$ and repeating this process we will get a sequence $j_n.$  Note that, if $j_k \in S$  for  some $k \in \N,$ then our lemma holds. Otherwise, 
we will get a sequence $j_n \in  S'$ for all $n \geq 1.$

 Assume that, there exist an integer $j$ such that $j_n \in  S'$ for all $n \geq 1.$  It is very easy to observe that  $j_{n+1} = 2q -2j_n$ 
 for all $n\geq1.$ Also by using induction one can prove that $$j_{n+1} =2q\big( \frac{1-(-2)^{n}}{3} \big) + (-2)^{n}j_1$$
 where $j_1=j.$  Since $j_{n+1} \in S',$ hence
 $$j_{n+1} \geq \frac{q+1}{2}   \hspace{.2cm} \text{for all } n \geq 1, $$
 from where we get 
 $$2q\big( \frac{1-(-2)^{n}}{3} \big) + (-2)^{n}j \geq \frac{q+1}{2} $$  
 and 
 \begin{equation}\label{g}
  (-1)^{n}j \geq  \frac{-q+3 + 4q(-2)^n}{ 2^{n+1} 3}.
  \end{equation} 
  
 Also observe that initially, we have $\frac{q+1}{2}  \leq j \leq q-1.$ Let $a_0= \frac{q+1}{2}$ and $b_0 =q-1.$ Now take $n=1$ in $\eqref{g}$, 
 then we will get $j \leq \frac{3q-1}{2^2} $  and hence $\frac{q+1}{2} \leq j \leq \frac{3q-1}{2^2}.$ Again put  $a_1 =\frac{q+1}{2} $ and $b_1 =\frac{3q-1}{2^2}.$ 
 Continuing this process for any n, we will get $a_n$ and $b_n$ such that $a_n \leq j \leq b_n.$  A quick calculation will give
 $$a_n =\frac{-q+3 + (-2)^{n} 4q}{ 2^{n+1} 3}, \text{ where } n \text{ is even }$$
 and 
 $a_{n+1}=a_n.$ Similarly, 
  $$b_n =-\Big(\frac{-q+3 + (-2)^{n} 4q}{ 2^{n+1} 3}\Big), \text{ for } n \text{ odd }$$
  and $b_{n+1}= b_n.$
 Also note that for any $n,$  we have $b_n -a_n= \frac{q-3}{2^{n+1}}.$ Choose an integer $n$, say $n_0$, 
such that $\frac{q-3}{2^{n_0+1}}<1$.  Thus, there exist at most one integer $j\in S'$ such that $a_{n_0}\le j \le b_{n_0}$.
Therefore, one more iteration will give $2q-2j=j$, which implies that $j={2q \over 3}$. This is a contradiction as $j$ is an integer and
$3\nmid q$ . This completes the proof.
\end{proof}
Now before proving our theorem, we will discuss another important  lemma that will play a crucial role in proving our theorem
(See \cite{bass}, \cite{CMP} and \cite{ennola}).

 \begin{lem}
  \textcolor{red}{For any  $ m = q^n$, where $q$ is an odd prime and $n>1,$  $ \log \sin \frac{k \pi }{m}$ where $ (k, m)>1$  }
 and $ 1 \leq k \leq m-1$ can be written as a linear combination of $ \log 2 \text { and } \log \sin \frac{r \pi}{m}$ where $ (r,m)=1$ and $ 1 \leq r \leq m-1.$ 
 \end{lem}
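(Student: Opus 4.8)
The plan is to proceed by the same telescoping / reflection strategy used in Lemma 3.8, but now adapted to a prime power modulus $m=q^n$ and to indices $k$ with $(k,m)>1$. First I would write any such $k$ with $(k,m)>1$ as $k = q^s \ell$ where $1 \le s \le n-1$ and $q \nmid \ell$, so that $\sin\frac{k\pi}{m} = \sin\frac{\ell\pi}{q^{n-s}}$; this reduces the problem to expressing $\log\sin\frac{\ell\pi}{q^t}$ (with $q\nmid\ell$, $t<n$) in terms of $\log 2$ and the $\log\sin\frac{r\pi}{m}$ with $(r,m)=1$. The key algebraic input is the multiple-angle identity already invoked in Lemma 3.7: for odd $q$ and any real $x$,
\begin{equation*}
\sin(qx) = 2^{q-1}\prod_{b=0}^{q-1}\sin\!\Big(x+\frac{b\pi}{q}\Big).
\end{equation*}
Applying this with $x = \frac{\ell\pi}{q^{t+1}}$ gives
\begin{equation*}
\sin\frac{\ell\pi}{q^{t}} = 2^{q-1}\prod_{b=0}^{q-1}\sin\!\Big(\frac{\ell\pi + q^{t}b\pi}{q^{t+1}}\Big) = 2^{q-1}\prod_{b=0}^{q-1}\sin\frac{(\ell+q^{t}b)\pi}{q^{t+1}},
\end{equation*}
and for $0\le b\le q-1$ exactly one value of $b$ makes $\ell+q^tb$ divisible by $q$ (namely $b\equiv -\ell q^{-t}\pmod q$), while the other $q-1$ factors have argument coprime to $q^{t+1}$ after we normalize the sines to lie in $(0,\pi)$ using $\sin(\pi - \theta)=\sin\theta$ and $\sin(\theta + \pi) = -\sin\theta$ (absorbing the signs, which contribute nothing to $\log|\cdot|$, or are handled as in Lemma 3.8 by carrying an extra $\log 2$-free constant). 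Taking logarithms,
\begin{equation*}
\log\sin\frac{\ell\pi}{q^{t}} = (q-1)\log 2 + \log\sin\frac{\ell'\pi}{q^{t}} + \sum_{\substack{0\le b\le q-1\\ q\,\nmid\,(\ell+q^tb)}} \log\sin\frac{(\ell+q^tb)\pi}{q^{t+1}},
\end{equation*}
where $\ell' = (\ell+q^t b_0)/q$ and the remaining sum involves only arguments with numerator coprime to $q^{t+1}$ (after reduction into range). Thus each $\log\sin$ at "level $t$" is expressed via $\log 2$, a $\log\sin$ at level $t$ with strictly smaller numerator, and $\log\sin$'s at level $t+1$ of the desired coprime-to-$q$ type. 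Iterating this upward in $t$ (each step raising the power of $q$ in the denominator by one, until the denominator reaches $q^n=m$) and, within a fixed level, downward in the numerator, gives a terminating recursion whose leaves are exactly the terms $\log 2$ and $\log\sin\frac{r\pi}{m}$ with $(r,m)=1$.

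The main obstacle — and the step I would spend the most care on — is bookkeeping the termination of this double recursion and checking that no term with $(k,m)>1$ and denominator already equal to $m$ can reappear in a way that loops: one must verify that raising the $q$-adic valuation of the denominator strictly increases at each application of the multiple-angle identity, so that after at most $n - s$ steps every argument has denominator $m$ with numerator coprime to $q$, and that the auxiliary "same-level, smaller numerator" terms also terminate (they do, since the numerator is a positive integer that strictly decreases). A secondary technical point, exactly parallel to the sign-tracking in Lemma 3.8, is that reflections $\sin(\pi-\theta)=\sin\theta$ and half/full-period shifts introduce signs $\pm 1$ which are irrelevant for $\log|\sin|$; since throughout the paper $\log\sin(\cdot)$ denotes the logarithm of a positive quantity (all arguments $j\pi/m$ lie in $(0,\pi)$), these signs simply disappear. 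Finally, for the base case $n=2$ one checks directly that $\log\sin\frac{q\pi}{q^2}=\log\sin\frac{\pi}{q}$ and that the $k$ with $q\mid k$ are precisely the multiples $k=q,2q,\dots,(q-1)q$, each handled in one application of the identity above, which anchors the induction and completes the proof.
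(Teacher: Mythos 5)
Your overall strategy --- the multiple-angle identity $\sin(qx)=2^{q-1}\prod_{b=0}^{q-1}\sin(x+\frac{b\pi}{q})$ used to raise the power of $q$ in the denominator --- is the same device the paper uses, except that the paper applies the corresponding distribution relation \emph{once} with the full prime power: writing $k=q^r t$ with $(t,q)=1$, it uses $\sin\frac{q^r t\pi}{m}=2^{q^r-1}\prod_{u\equiv t\bmod q^{n-r}}\sin\frac{u\pi}{m}$, and every $u$ occurring satisfies $u\equiv t\not\equiv 0\pmod q$, hence $(u,m)=1$. The lemma then follows in a single step, with no recursion at all.

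However, your key displayed identity is wrong as stated, and the error shapes the whole architecture of your argument. For $t\ge 1$ one has $\ell+q^tb\equiv\ell\pmod q$ for \emph{every} $b\in\{0,\dots,q-1\}$, so since $q\nmid\ell$ there is no value of $b$ making $\ell+q^tb$ divisible by $q$ --- not ``exactly one.'' Consequently the term $\log\sin\frac{\ell'\pi}{q^t}$ with $\ell'=(\ell+q^tb_0)/q$ does not exist, the restriction $q\nmid(\ell+q^tb)$ on your sum is vacuous, and the entire ``same-level, smaller numerator'' branch of the double recursion --- precisely the part you identify as the main obstacle and whose termination you propose to verify --- is spurious. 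The correct identity is simply $\log\sin\frac{\ell\pi}{q^t}=(q-1)\log 2+\sum_{b=0}^{q-1}\log\sin\frac{(\ell+bq^t)\pi}{q^{t+1}}$, in which every numerator is already coprime to $q$; iterating it $s$ times (or invoking the paper's one-shot version with $b=q^r$) finishes the proof. Your worries about normalization and signs are likewise unnecessary: with $\ell<q^t$ one has $0<\ell+bq^t<q^{t+1}$, so all arguments already lie in $(0,\pi)$. In short, the underlying idea is sound and coincides with the paper's, but as written the proof contains a false equation and an invented complication; once corrected it collapses to the paper's argument.
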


 \begin{proof}

 Let $\zeta_m = e^{2\pi i /m},$ be the primitive $m$-th root of unity. Consider the numbers $1-\zeta_m^x$ for $x= 1,2,3,...,m-1.$ Then for any divisor $b>1$ of $m$ , we have
 $$\log |1-\zeta_m^{(m/b)x}|= \sum\limits_{\substack{u=1 \\ u \equiv x \text{  mod }  b}}^{m-1}\log |1-\zeta_m^u | .$$
 Also for any $ 1 \leq k \leq m-1,$
 $$\sin \frac{k \pi}{m}= \frac{e^{-i k \pi/m} (\zeta_m^k-1)}{2i}.$$
  Thus  for any divisor $b$ of $m$, 
  $$ \sin \frac{bx \pi }{m}= \ 2^{b-1}  \prod\limits_{\substack{u=1 \\u \equiv x \text{ mod } m/b}}^{m-1} \sin \frac{ u \pi}{m}.$$
Hence, for any divisor $b$ of $m$ and  $ x = 1 ,2,..., m-1, $ we have 
\begin{equation}\label{at}
\log \sin\frac{bx \pi}{m}= (b-1) \log 2 \ +  \sum\limits_{\substack{u=1 \\ u \equiv x \text { mod } m/b}}^{m-1} \log \sin \frac{u \pi }{m}.
\end{equation}

  Let $1 \leq k \leq m-1$ be such that $(k,m)>1.$
 Suppose that $k=q^r t$ where $ 1 \leq r < n$ and  $(t,q)=1.$ 
 Then substituting the value of   $b=q^r$ and $x=t$ in \eqref{at}, we get
 \begin{equation}\label{aaz}
 \log \sin\frac{k \pi}{m}= (q^r-1) \log 2 \ +  \sum\limits_{\substack{u=1 \\ u \equiv t \text { mod } m/b}}^{m-1} \log \sin \frac{u \pi }{m}.
 \end{equation}
 From the above equation,
 we have $ u= t + q^{n-r}s$ where $ s \in \{0,1,2,...,q^r-1   \}.$ Since $(t,q)=1,$ hence $(u,m)=1.$
\end{proof}

Now by using the previous lemmas, we will make an important remark.

\begin{remark}
 Let $J$ be any finite set of odd primes in $\N$ with $|J|=n$ and $3\notin J.$  Consider the set
$$\{ \log 2, \hspace{.06cm} \log q_i, \  \log \sin (\frac{k_{j_i}\pi}{2q_i^{m_i}})\hspace{.1cm} |  \hspace{.14cm} 1 \leq k_{j_i} \leq q_i^{m_i} -1 
\hspace{.3cm}\text{and}\hspace{.3cm}  1 \leq j_i \leq q_i^{m_i}-1, \ \ \forall \ q_i \in J, \  m_i \in \N\}.$$
Then by using lemma 3.6, 3.7, 3.8 and lemma  3.9  out of the above set, the subset
$$\{ \log 2, \ \log \sin (\frac{k_{j_i}\pi}{q_i^{m_i}})\hspace{.1cm} |  \hspace{.14cm} 1 \leq k_{j_i} \leq \frac{q_i^{m_i}-1}{2} \hspace{.3cm}\text{and}
\hspace{.3cm}  1 \leq j_i \leq \frac{q_i^{m_i}-1}{2},\    (k_{j_i},q_i^{m_i})=1\}$$
where $ q_i \in J$ and  $ m_i \in \N $,
is a maximal linearly independent subset over $\Q$ and hence by using  Baker's theorem, the numbers
$$\{ 1, \log 2, \ \log \sin (\frac{k_{j_i}\pi}{q_i^{m_i}})\hspace{.1cm} |  \hspace{.14cm} 1 \leq k_{j_i} \leq \frac{q_i^{m_i}-1}{2} \hspace{.3cm}\text{and}
\hspace{.3cm}  1 \leq j_i \leq \frac{q_i^{m_i}-1}{2},\    (k_{j_i},q_i^{m_i})=1\}$$
are linearly independent over $\overline{\Q}$.
\end{remark}

Now we are in a position to state our theorem which establish the upper bound of  linear independence of harmonic numbers over the field of algebraic numbers  for some special cases.

\begin{thm}

For any finite set $J$ of odd primes  with $|J|=n$ and $ 3 \notin J$,  at most
 $$\sum\limits_{\substack{i=1 \\ q_i \in J}}^n \frac{\phi (q_i^{m_i} )}{2} + 3 $$
 many numbers of the set 
 $$\{H_1,\ H_{(a_{j_i}/2^{e_i}q_i^{m_i})} \ | \   1\leq a_{j_i}  \leq 2^{e_i}q_i^{m_i}-1  \text{ and } 1 \leq j_i \leq 2^{e_i}q_i^{ m_i }-1, \ \    \forall  \ \ q_i \in J,\  m_i \in \N\}$$   
 where $e_i \in \{0,1\}$,  are linearly independent over algebraic numbers.

\end{thm}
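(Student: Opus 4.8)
The plan is to run essentially the same argument as in the proof of Theorem 3.5, but with Lemma 3.4 replaced by the sharper reductions of logarithms of sines contained in Lemmas 3.6--3.9 and in the Remark preceding the theorem. Write $N:=\sum_{i=1}^{n}\frac{\phi(q_i^{m_i})}{2}+3$, pick any $r>N$ members $H_{\beta_1},\dots,H_{\beta_r}$ of the displayed set and suppose
$$c_1H_{\beta_1}+\cdots+c_rH_{\beta_r}=0,\qquad c_\ell\in\overline{\Q}.$$
The goal is to extract from this relation a homogeneous linear system of at most $N$ equations in $c_1,\dots,c_r$ with algebraic coefficients; since $r>N$ such a system has a nontrivial solution, so the chosen harmonic numbers are linearly dependent over $\overline{\Q}$, and hence no more than $N$ of the numbers in the set can be linearly independent.

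First I would normalise each argument: if $\beta_\ell=1$, use $H_1=1$; otherwise write $\beta_\ell=a_\ell/Q_\ell$ in lowest terms, so $Q_\ell>1$ divides the corresponding $2^{e_i}q_i^{m_i}$, and substitute the Gauss formula \eqref{x} for $H_{a_\ell/Q_\ell}$. Collecting terms, the relation takes the shape
$$R-\frac{\pi}{2}\Big(\sum_\ell c_\ell\cot(\pi a_\ell/Q_\ell)\Big)+L=0,$$
where $R$ is the algebraic number coming from the rational parts $c_\ell Q_\ell/a_\ell$ together with the $1$ from $H_1$, and $L$ is a $\overline{\Q}$-linear combination of $\log2$, of the primes' logarithms $\log q_i$ (appearing through the terms $\log(2Q_\ell)$) and of logarithms $\log\sin(\pi k/Q_\ell)$. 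Since each $\cot(\pi a_\ell/Q_\ell)$ is algebraic, $c_0:=\sum_\ell c_\ell\cot(\pi a_\ell/Q_\ell)\in\overline{\Q}$; moreover by \eqref{v} each $\log q_i$ is itself a $\overline{\Q}$-combination of $\log2$ and logarithms of sines of rational multiples of $\pi$, so $-\tfrac{c_0}{2}\pi+L$ is a linear form in $\pi$ and logarithms of positive algebraic numbers. If $c_0\neq0$, this form is transcendental by Proposition 2.2, contradicting that it equals the algebraic number $-R$. Hence $c_0=0$, which is one linear equation in the $c_\ell$, and the relation collapses to $R+L=0$.

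The key step is to rewrite $L$ in terms of a fixed linearly independent family. Using the symmetry $\sin(\pi k/Q)=\sin(\pi(Q-k)/Q)$, Lemma 3.9 (to remove indices $k$ not coprime to the relevant $q_i^{m_i}$, which also disposes of the denominators $q_i^{m'}$ with $m'<m_i$ after writing $\sin(\pi k/q_i^{m'})=\sin(\pi kq_i^{m_i-m'}/q_i^{m_i})$), Lemma 3.8 (for the denominators of the form $2q_i^{m'}$, which is legitimate since $3\notin J$), equation \eqref{v} from Lemma 3.7 (to rewrite $\log q_i$), and Lemma 3.6, one expresses $L$ as a $\overline{\Q}$-linear combination of
$$M:=\{\log2\}\ \cup\ \Big\{\log\sin\tfrac{k_{j_i}\pi}{q_i^{m_i}}\ :\ 1\le k_{j_i}\le\tfrac{q_i^{m_i}-1}{2},\ (k_{j_i},q_i^{m_i})=1,\ q_i\in J\Big\}.$$
By the Remark above, $M$ is linearly independent over $\Q$, so by Baker's theorem (Proposition 2.1) the set $\{1\}\cup M$ is linearly independent over $\overline{\Q}$. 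Since $|M|=1+\sum_{i=1}^{n}\frac{\phi(q_i^{m_i})}{2}$, the relation $R+L=0$ forces $R=0$ and the vanishing of each of the $|M|$ coefficients of $L$, that is $1+|M|=2+\sum_{i=1}^{n}\frac{\phi(q_i^{m_i})}{2}$ further homogeneous linear equations in the $c_\ell$.

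Altogether the $c_\ell$ satisfy at most $1+\big(2+\sum_i\frac{\phi(q_i^{m_i})}{2}\big)=\sum_i\frac{\phi(q_i^{m_i})}{2}+3=N$ homogeneous linear equations with algebraic coefficients in the $r>N$ unknowns $c_1,\dots,c_r$; hence a nontrivial solution exists, the chosen numbers are linearly dependent over $\overline{\Q}$, and the stated bound follows. The main obstacle is the third paragraph: one must check carefully that every logarithm $\log\sin(\pi k/Q_\ell)$ that can occur — over all denominators $Q_\ell$ dividing some $2^{e_i}q_i^{m_i}$, and for both parities $e_i\in\{0,1\}$ as well as the cases $m_i=1$ and $m_i>1$ — actually lies in the $\overline{\Q}$-span of $M$. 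This is precisely the content that Lemmas 3.6--3.9 together with the Remark are designed to provide, but organising the case analysis so that no "stray" $\log\sin$ with denominator of the form $2q_i^{m'}$ survives is the delicate point.
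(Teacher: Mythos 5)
Your proposal follows essentially the same route as the paper's proof: substitute the Gauss formula, use Proposition 2.2 to kill the $\pi$-coefficient, reduce the surviving logarithms of sines to the maximal $\Q$-linearly independent family $\{\log 2\}\cup\{\log\sin(k\pi/q_i^{m_i}) : (k,q_i)=1\}$ via Lemmas 3.6--3.9 and equation \eqref{v}, and count at most $\sum_i\phi(q_i^{m_i})/2+3$ homogeneous equations in $r$ unknowns. The only (immaterial) difference is bookkeeping --- you obtain one equation from Proposition 2.2 and then both $R=0$ and the log-coefficient equations from Baker's theorem applied to $\{1\}\cup M$, while the paper attributes the vanishing of the algebraic part and the cotangent part jointly to Proposition 2.2 --- and the ``delicate point'' you flag about stray $\log\sin(k\pi/2q_i^{m'})$ terms is handled in the paper by exactly the case split $e_s\in\{0,1\}$ you describe.
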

\begin{proof}

First note that for any $1 \leq a \leq 2q-1,$
 $$H_{a/2q}= \frac{2q}{a}   -\log (4q) - \frac{\pi}{2} \cot(\pi a/2q) + 2\sum_{k=1} ^{\lfloor \frac{2q-1}{2} \rfloor}    \cos (\frac{ \pi k a}{q})  \log \sin(\frac{\pi k}{2q}) $$
Choose an $r \in \N$ with $r>\sum\limits_{\substack{i=1 \\ q_i \in J}}^n \frac{\phi (q_i^{m_i} )}{2}  +3 .$  Let $T$ be any  subset of the  set
$$\{ H_1,\hspace{.08cm}  H_{(a_{j_i}/2^{e_i}q_i^{m_i})} \ | \hspace{.12cm}  1\leq a_{j_i}  \leq 2^{e_i}q_i^{m_i}-1 \hspace{.16cm} \text{ and } 1\leq j_i \leq 2^{e_i}q_i^{m_i}-1 \ \text{ for all }  q_i \in J\},$$ 
 where $e_i \in \{ 0,1\}$, containing $r$ many elements. Consider the  $\overline{\Q}$- linear combination of the set $T$  
 \begin{equation*}
 \sum\limits_{\substack{s \\ q_s \in J }}\sum_{t_s}  c_{st_s} H_{(t_s/2^{e_s}q_s^{m_s})}=0
 \end{equation*}
 where $1 \leq t_s \leq 2^{e_s}q_s^{m_s}$ and  the sum runs over all  $s,t_s$  such that $H_{(t_s/2^{e_s}q_s^{m_s})} \in T.$ \   Substituting the value of $H_{(t_s/2^{e_s}q_s^{m_s})}$ in 
 the above equation, we get
 \vspace{.3cm}
$$ \sum\limits_{\substack{s \\ q_s \in J }}\sum_{t_s}  c_{st_s} \Big(\frac{2^{e_s}q_s^{m_s}}{t_s}   -m_s\log (q_s) - (e_s+1) \log 2- \frac{\pi}{2} \cot(\pi t_s/2^{e_s}q_s^{m_s})\Big) $$
 \begin{equation}\label{baa}
 + \Bigg( 2\sum\limits_{\substack{s \\ q_s \in J }}\sum_{t_s}  c_{st_s}\bigg( \sum_{k=1} ^{\lfloor \frac{2^{e_s}q_s^{m_s}-1}{2} \rfloor}    
\Big(  \cos (\frac{ 2\pi k t_s}{2^{e_s}q_s^{m_s}})  \log \sin(\frac{\pi k}{2^{e_s}q_s^{m_s}})  \Big) \bigg) \Bigg)=0.
 \end{equation}
Now, by using proposition 2.2  in  the above equation, we get two linear homogeneous equations in the variables $c_{st_s}$ :
$$
 \sum\limits_{\substack{s \\ q_s \in J }}\sum\limits_{t_s}  \big(c_{st_s} \big(\frac{2^{e_s}q_s^{m_s}}{t_s}\big)\big) =0  $$
 and
 $$
 \sum\limits_{\substack{s \\ q_s \in J }}\sum\limits_{t_s} \Big( c_{st_s}   \cot(\pi t_s/2^{e_s}q_s^{m_s})\Big)=0.$$
Thus,  equation \eqref{baa} reduces to
$$  \sum\limits_{\substack{s \\ q_s \in J }}\sum_{t_s}  c_{st_s} \Bigg(   -m_s\log (q_s) - (e_s+1) \log 2  \Bigg) \ +  $$
 \begin{equation}\label{baaa}
+ 2 \sum\limits_{\substack{s \\ q_s \in J }}\sum_{t_s}  c_{st_s} \Bigg(    \sum_{k=1} ^{\lfloor \frac{2^{e_s}q_s^{m_s}-1}{2} \rfloor}    
\Big(  \cos (\frac{ 2\pi k t_s}{2^{e_s}q_s^{m_s}})  \log \sin(\frac{\pi k}{2^{e_s}q_s^{m_s}})  \Big)\Bigg)=0.
 \end{equation}
 Observe that for any odd prime $q_s$ by using \eqref{v}, we have
\vspace{.3cm}
$$\log q_s=2\Big(\log \sin(\frac{\pi}{q_s}) +... + \log  \sin(\frac{\alpha\pi}{q_s}) + \alpha_s \log 2\Big).
$$
where $\alpha_s=\frac{q_s-1}{2}$. Substitute the value of $\log(q_s)$ in \eqref{baaa}.
 Now we want to find a maximal linearly independent subset of the set
 \vspace{.2cm}
$$  H= \{ \log 2,  \   \log \sin(\frac{\pi k}{2^{e_s}q_s^{m_s}}) \ | \  1 \leq k \leq {\lfloor \frac{2^{e_s}q_s^{m_s}-1}{2} \rfloor}      \} 
 $$
  where $s$ is such that $H_{t_s/2^{e_s}q_s^{m_s}}\in T$.
 \textcolor{red}{ 
First consider the case when $e_s=0$. Then  out of  the set
\vspace{.3cm}
$$
 H_1= \{  \  \log 2,  \   \log \sin(\frac{\pi k}{q_s^{m_s}}) \ | \  1 \leq k \leq {\lfloor \frac{q_s^{m_s}-1}{2} \rfloor}      \} 
 $$
 the subset
 $$S=  \{    \log 2,  \   \log \sin(\frac{\pi k}{q_s^{m_s}}) \ | \  1 \leq k \leq { \frac{q_s^{m_s}-1}{2} }  ,\  (k, q_s)=1    \}$$
  is a maximal linearly independent subset by using lemma 3.6, 3.7 and 3.9 having at most $\sum\limits_{\substack{i=1 \\ q_i \in J}}^n \frac{\phi (q_i^{m_i} )}{2} + 1$
many elements. Now consider  the case when $e_s=1$. Then by using lemma 3.8, the set
$$T_1=\left \{\log \sin(\frac{\pi k}{2q_s^{m_s}}) , \text{ where }\  \frac{q_s^{m_s}+1}{2} \leq k \leq q_s^{m_s}-1\right \},$$
\textcolor{red}{can be written as an algebraic linear combination of the set}
$$T_2=\left \{ \log2, \  \log \sin(\frac{\pi k}{2q_s^{m_s}}) , \text{ where }\  1 \leq k \leq \frac{q_s^{m_s}-1}{2}\right \} .  $$
Thus, for $e_s=1$, out of the set
\vspace{.3cm}
$$
 H_2=  \{  \  \log 2,  \   \log \sin(\frac{\pi k}{2q_s^{m_s}}) \ | \  1 \leq k \leq {\lfloor \frac{2q_s^{m_s}-1}{2} \rfloor}      \} 
 $$
 the subset
 \vspace{.3cm}
 $$S=  \{    \log 2,  \   \log \sin(\frac{\pi k}{q_s^{m_s}}) \ | \  1 \leq k \leq { \frac{q_s^{m_s}-1}{2} }  ,\  (k, q_s)=1    \}$$ 
is a maximal linearly independent set over $\overline{\Q}$, since $H_1$ is contained in $H_2$ and for $H_1$ the subset $S$ is a maximal linearly independent subset.
 Thus, in both the cases the subset 
  $$S=  \{    \log 2,  \   \log \sin(\frac{\pi k}{q_s^{m_s}}) \ | \  1 \leq k \leq { \frac{q_s^{m_s}-1}{2} }  ,\  (k, q_s)=1    \}$$
  is a maximal linearly independent subset of the set
 \vspace{.3cm}
 $$
 H=  \{    \log 2,  \   \log \sin(\frac{\pi k}{2^{e_s}q_s^{m_s}}) \ | \  1 \leq k \leq {\lfloor \frac{2^{e_s}q_s^{m_s}-1}{2} \rfloor}      \} 
 $$
 where $s$ is such that $H_{t_s/2^{e_s}q_s^{m_s}}\in T$ for some $t_s$. } 
Rewriting \eqref{baaa} in terms of elements of the set $S$, we get an equation of the form

\begin{equation}\label{laaa}
\sum\limits_{\substack{s \\ q_s \in J }}\sum_{t_s}  c_{st_s} \Bigg(    \alpha_s \log 2+ 2\sum\limits_{\substack{k=1,\\(k,q_s)=1}} ^\frac{q_s^{m_s}-1}{2}
\Big(  \beta_k^s  \log \sin(\frac{\pi k}{q_s^{m_s}})  \Big)\Bigg) =0
\end{equation}
where $\alpha_s , \  \beta_k^s \in \overline{\Q}.$
Since $S$ is a linearly independent set over $\overline{\Q}$, therefore using this in \eqref{laaa}, we get
$$\sum\limits_{\substack{s \\ q_s \in J }}\sum_{t_s}  c_{st_s}     \alpha_s =0,$$
$$\sum\limits_{\substack{s \\ q_s \in J }}\Big(\sum_{t_s}  c_{st_s} \big(\beta_k^s   \big)\Big)=0,$$
for all $ 1 \leq k \leq \frac{q_s^{m_s}-1}{2}$ with $(k,q_s)=1$ and $s,t_s$ are  such that $H_{t_s/2^{e_s}q_s^{m_s}} \in T.$
So we will get at most $\sum\limits_{\substack{i=1 \\ q_i \in J}}^n \frac{\phi (q_i^{m_i} )}{2} + 1 $ many linear homogeneous equations in the variables $c_{st_s}$.
Thus, altogether we will get a linear homogeneous system of at most 
$\sum\limits_{\substack{i=1 \\ q_i \in J}}^n \frac{\phi (q_i^{m_i} )}{2} + 3 $
many equations with algebraic coefficients  in  $r> \sum\limits_{\substack{i=1 \\ q_i \in J}}^n \frac{\phi (q_i^{m_i} )}{2} + 3 $ many variables $c_{st_s}.$ 
Thus, we will get a non-trivial algebraic solution for $c_{st_s}$. This completes the proof.
\end{proof}

 \section{\bf \textcolor{red}{ Dimension of the space generated by Harmonic Numbers}}

So far we have investigated about the upper bound for the number of  linearly independent harmonic numbers over the field of algebraic numbers. 
In this section, we will study the linear spaces generated by harmonic numbers over $\overline{\Q}$ and their dimensions. Our next theorem will give the dimension 
of the vector space generated by harmonic numbers over $\overline{\Q}$. But before that we will make an  important  remark .
\begin{remark}
 Note that for any odd prime $q,$ with $q \neq 3,$ we have  from   theorem 3.10 that at most $\frac{\phi(q)}{2} +3$ many numbers of the set 
 $\{H_{a/q} \ | \ 1 \leq a \leq q\}$ can be linearly independent over $\overline{\Q}$.
Choose any $r= \frac{\phi(q)}{2} +3$  many elements of the set $\{H_{a/q} \ | \ 1 \leq a \leq q\}$ say, $H_{\alpha_1/q},...,H_{\alpha_r/q},$ 
where $1\leq \alpha_i \leq q$ \  for $1 \leq i \leq r,$ such that
$$c_1 H_{\alpha_1/q} + ... + c_rH_{\alpha_r/q}=0,$$   
where $c_i^{~,}s \in \overline{\Q}.$ \textcolor{red}{Using the Gauss formula  in equation \eqref{0001} for  $H_{a/q},$  the above equation becomes }

 \begin{equation*}
 \Big( \frac{c_1q}{\alpha_1} +...+\frac{c_rq}{\alpha_r}\Big)- \frac{\pi}{2}\Big(c_1\cot(\pi \alpha_1/q)+...+c_r\cot(\pi \alpha_r/q)\Big) -\log(2q)\Big(c_1+...+c_r \Big) + 
 \end{equation*}
 \begin{equation*}
 + \  2\sum_{n=1} ^{\lfloor \frac{q-1}{2} \rfloor}  \log \sin(\pi n/q) \Big( c_1 \cos (\frac{2 \pi n \alpha_1}{q})  +...+ c_r\cos (\frac{2 \pi n \alpha_r}{q}) \Big) =0 .
 \end{equation*}
 By using proposition 2.2, the above equation reduces to
 \begin{equation*}
  -\log(2q)\Big(c_1+...+c_r \Big) 
 + 2\sum_{n=1} ^{\lfloor \frac{q-1}{2} \rfloor}  \log \sin(\pi n/q) \Big( c_1 \cos (\frac{2 \pi n \alpha_1}{q})  +...+ c_r\cos (\frac{2 \pi n \alpha_r}{q}) \Big) =0 .
 \end{equation*}
Now by using Baker's theorem and lemma 3.6, 3.7,  the numbers
$$ \log 2q, \ \log \sin (\frac{n\pi}{q}),\  \text{ where } 1 \leq n \leq \frac{q-1}{2}, $$
are linearly independent over $\overline{\Q}.$
Thus, using the similar idea as we applied in theorem 3.10,  we will get a linear homogeneous system of equations in $r$ many equations and  $r$ variables 
$c_i^{~,}s.$ Now let $\alpha=\frac{q-1}{2},$ then  this system of linear homogeneous equation is equivalent to a matrix equation of the form  $Ax=0,$ where   
$A$ is  an $ r \times r$ matrix and  $x=[c_1,...,c_r]^T,$ which is given by

\[
\begin{bmatrix}
    \frac{q}{\alpha_1}   \quad     & \frac{q}{\alpha_1}~~   & \dots & ~~ \frac{q}{\alpha_r} \\ \\
    
    \cot(\frac{\pi \alpha_1}{q})\quad     &  \cot(\frac{\pi \alpha_2}{q})~~ & \dots & \cot(\frac{\pi \alpha_r}{q})  \\ \\
    
    1 & 1 & \dots & 1 \\ \\
     \cos(\frac{2\pi \alpha_1}{q}) \quad    &  \cos(\frac{2\pi \alpha_2}{q}) & \dots & \cos(\frac{2\pi \alpha_r}{q})  \\ \\
     
      \cos(\frac{4\pi \alpha_1}{q})  \quad    &  \cos(\frac{4\pi \alpha_2}{q}) & \dots & \cos(\frac{4\pi \alpha_r}{q})  \\ \\
      \vdots & \vdots & \vdots & \vdots \\ \\
    
     \cos(\frac{2\pi \alpha \alpha_1}{q})     &  \cos(\frac{2\pi \alpha\alpha_2}{q}) & \dots & \cos(\frac{2\pi \alpha \alpha_r}{q})  
\end{bmatrix}
\begin{bmatrix}{}
    ~c_1~ \\ \\
    c_2 \\ \\
    c_3 \\ \\
    \vdots \\ \\
     \vdots \\ \\
     \vdots \\ \\
    ~c_r~
\end{bmatrix}
=
\begin{bmatrix}{}
    ~0 ~\\ \\
    ~0~ \\ \\
    ~0 ~\\ \\
    \vdots \\ \\
     \vdots \\ \\
     \vdots \\ \\
   ~ 0~
\end{bmatrix}
\]

Observe that for any $1 \leq \alpha_j \leq \alpha,$ where $\alpha=\frac{q-1}{2},$ we have $ \cos \frac{2t\alpha_j \pi} {q}=\cos \frac{2t(q-\alpha_j)\pi}{q}$, 
for all $1  \leq t \leq \alpha. $ Hence for each $ 1 \leq t \leq \alpha,$ by using pigeon-hole principle,  out of the collection 
$\{ \cos (\frac{2 t \alpha_j \pi}{q}) | \hspace{.2cm} 1\leq j \leq r, \hspace{.2cm} 1\leq \alpha_j \leq q-1\}$ at least three numbers must be repeating twice.  
From this we can deduce that in the above matrix $A_{ij},$ in   each row $i$ with $4 \leq i \leq \alpha+3,$ at least three entries are repeating. Now suppose 
that for some $1 \leq t_1 \leq \alpha,$ we have $ \cos \frac{2t_1\alpha_i \pi} {q}=\cos \frac{2t_1\alpha_j\pi}{q}$ for $ 1 \leq i ,j \leq r,$ then
$ \alpha_j = q - \alpha_i.$ Hence,  $ \cos \frac{2t\alpha_i \pi} {q}=\cos \frac{2t\alpha_j\pi}{q}$ for all $ 1 \leq t \leq \alpha.$ Thus, we can conclude that
if $ A_{ij_1}= A_{ij_2}$ for some $i$ with $ 4 \leq i \leq \alpha,$ then $ A_{ij_1}= A_{ij_2}$ for all $ 4 \leq 
i \leq \alpha .$
 Now by performing some elementary column operation on the  above matrix, it is easy to get
$A_{ij}=0,$ for $3\leq i \leq r$ and $r-2 \leq j \leq r.$ From here we can deduce  that the determinant of $A$ is zero. Thus we will get a non-trivial solution 
for $c_i^{~,}s.$  Hence, we can conclude that at most $\frac{\phi(q)}{2} +2$ many numbers of the set $\{H_{a/q} | 1 \leq a \leq q\}$ can be linearly independent 
over $\overline{\Q}.$

\end{remark}
Now we are in a state to prove our next theorem that gives the exact number of elements which are linearly independent over $\overline{\Q}.$

\begin{thm}
For any   prime  $q,$ define $W_q=\overline{\Q}$\ -\ span of  $\{H_{a/q} \ | \ 1\leq a \leq q\}.$ Then, $$\text{ dim }_{\overline{\Q}} ~W_q=\frac{\phi(q)}{2} + 2.$$
\end{thm}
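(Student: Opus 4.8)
The plan is to prove the two inequalities $\dim_{\overline{\Q}}W_q\le\tfrac{\phi(q)}{2}+2$ and $\dim_{\overline{\Q}}W_q\ge\tfrac{\phi(q)}{2}+2$ separately, where $q$ is understood to be an odd prime so that $\phi(q)=q-1$. The upper bound is already available: for $q\ne 3$ it is precisely the conclusion of the remark preceding this theorem, and for $q=3$ it is trivial, since the spanning set $\{H_{a/q}\mid 1\le a\le q\}$ then has only $3=\tfrac{\phi(3)}{2}+2$ elements. So the real content is the lower bound: exhibiting $\tfrac{q-1}{2}+2$ harmonic numbers $H_{a/q}$ that are linearly independent over $\overline{\Q}$.

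I would work with the explicit collection
$$H_1\ (=H_{q/q}),\qquad H_{1/q},\ H_{2/q},\ \dots,\ H_{\frac{q-1}{2}\cdot\frac 1q},\qquad H_{(q-1)/q},$$
which has exactly $\tfrac{q-1}{2}+2$ members, and show that a relation $c_0H_1+c_{q-1}H_{(q-1)/q}+\sum_{a=1}^{(q-1)/2}c_aH_{a/q}=0$ with $c_i\in\overline{\Q}$ forces all $c_i=0$. Substituting the Gauss formula \eqref{x} together with $H_1=1$ and regrouping, the left-hand side takes the shape $A_0+A_1\pi+A_2\log(2q)+\sum_{n=1}^{(q-1)/2}B_n\log\sin(\pi n/q)$ with all $A_i,B_n$ algebraic (each $\cot(\pi a/q)$ is algebraic). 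By Proposition~2.2 the quantity $A_1\pi+A_2\log(2q)+\sum_nB_n\log\sin(\pi n/q)$ is transcendental whenever $A_1\ne 0$, so $A_1=0$; and then, since $\log(2q)$ and $\log\sin(\pi n/q)$ ($1\le n\le\tfrac{q-1}{2}$) are linearly independent over $\Q$ (established before this theorem using Lemmas~3.6 and 3.7), Baker's Proposition~2.1 forces $A_0=A_2=B_n=0$ for every $n$. With $A=\{1,2,\dots,\tfrac{q-1}{2},q-1\}$ this is the homogeneous system
\begin{gather*}
c_0+\sum_{a\in A}\frac{q}{a}\,c_a=0,\qquad \sum_{a\in A}\cot(\pi a/q)\,c_a=0,\\
\sum_{a\in A}c_a=0,\qquad \sum_{a\in A}\cos(2\pi n a/q)\,c_a=0\quad(1\le n\le\tfrac{q-1}{2}).
\end{gather*}

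To solve it I would use the identities $\cos(2\pi n(q-1)/q)=\cos(2\pi n/q)$ and $\cot(\pi(q-1)/q)=-\cot(\pi/q)$. The last batch of equations then reads $C\bigl(\mathbf c'+c_{q-1}\mathbf e_1\bigr)=0$, where $C=\bigl(\cos(2\pi n a/q)\bigr)_{1\le n,a\le(q-1)/2}$, $\mathbf c'=(c_1,\dots,c_{(q-1)/2})^{T}$, and $\mathbf e_1$ is the first coordinate vector. The crucial ingredient---and the step I expect to be the main obstacle---is the invertibility of the cosine matrix $C$. I would prove this by computing its Gram matrix: the product-to-sum identity together with the elementary evaluation $\sum_{n=1}^{(q-1)/2}\cos(2\pi n c/q)=-\tfrac12$ for $q\nmid c$ and $\tfrac{q-1}{2}$ for $q\mid c$ gives $CC^{T}=\tfrac q4 I-\tfrac12 J$ with $J$ the all-ones matrix, whose eigenvalues $\tfrac14$ and $\tfrac q4$ are nonzero, so $C$ is nonsingular. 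Invertibility of $C$ then yields $\mathbf c'=-c_{q-1}\mathbf e_1$, i.e.\ $c_1=-c_{q-1}$ and $c_2=\cdots=c_{(q-1)/2}=0$; substituting this into the $\cot$-equation gives $\cot(\pi/q)(c_1-c_{q-1})=0$, hence $c_{q-1}=0$ (as $\cot(\pi/q)\ne 0$) and $c_1=0$; and the first equation then forces $c_0=0$. Thus the displayed $\tfrac{q-1}{2}+2$ harmonic numbers are linearly independent over $\overline{\Q}$, which together with the upper bound gives $\dim_{\overline{\Q}}W_q=\tfrac{\phi(q)}{2}+2$. Apart from the nonsingularity of $C$, the argument is only bookkeeping with the Gauss formula and two appeals to Baker's theorem; the case $q=3$ is covered by the same reasoning (the upper bound being trivial there) or by a direct $3\times 3$ determinant computation with $H_1,H_{1/3},H_{2/3}$.
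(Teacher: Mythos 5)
Your argument is correct and follows the same skeleton as the paper's: upper bound from the preceding remark, lower bound by exhibiting $\tfrac{q-1}{2}+2$ explicit harmonic numbers, Gauss's formula plus Propositions 2.1--2.2 to reduce the putative relation to a homogeneous linear system, and then non-singularity of the cosine block to kill the coefficients. The two genuine differences are these. First, your extra element is $H_{(q-1)/q}$ where the paper takes $H_{((q+1)/2)/q}$; this is cosmetic, since in both cases the added column of the cosine block duplicates an existing one (via $\cos(2\pi n(q-1)/q)=\cos(2\pi n/q)$, resp.\ $\cos(2\pi n\frac{q+1}{2}/q)=\cos(2\pi n\frac{q-1}{2}/q)$) and the $\cot$ row breaks the tie. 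Second, and more substantively, you prove the invertibility of $C=\bigl(\cos(2\pi na/q)\bigr)_{1\le n,a\le (q-1)/2}$ by the Gram computation $CC^{T}=\tfrac{q}{4}I-\tfrac12 J$, whose eigenvalues $\tfrac14$ and $\tfrac q4$ are nonzero; I checked this identity and it is right ($\sum_{a=1}^{(q-1)/2}\cos(2\pi ca/q)$ equals $-\tfrac12$ for $q\nmid c$ and $\tfrac{q-1}{2}$ for $q\mid c$, and $q\nmid(n+m)$, $q\mid(n-m)\Leftrightarrow n=m$ in the relevant range). The paper instead identifies $2C_{ij}$ with $\sigma_i(\omega_j)$ for the embeddings of the maximal real subfield $\Q(\zeta_q+\zeta_q^{-1})$ acting on the integral basis of conjugates of $\zeta_q+\zeta_q^{-1}$, and invokes Dedekind's lemma. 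Your route is more elementary and self-contained, and even yields $|\det C|=\tfrac12(\sqrt q/2)^{(q-3)/2}$ explicitly; the paper's route is the one that generalizes directly to the multi-prime block matrices of Theorem 4.2. One small presentational nit: from $C(\mathbf c'+c_{q-1}\mathbf e_1)=0$ you get $c_1=-c_{q-1}$, and the $\cot$ equation gives $c_1=c_{q-1}$; you should state that it is the conjunction of these two that forces $c_1=c_{q-1}=0$, rather than the $\cot$ equation alone.
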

\begin{proof}
For $q\in \{2, 3\},$ the theorem follows easily. Now for the case when $q\notin \{2, 3\},$ let $\alpha=\frac{q-1}{2}.$ Also, observe that from  remark 2,  
$\text{ dim } W_q \leq \frac{\phi(q)}{2}+ 2.$

Consider the subset  $\{ H_{1/q}, \ H_{2/q},...\ ,H_{(\alpha+1)/q},\  H_1\}$ containing $\frac{\phi (q)}{2}+ 2$ many numbers.  To show that  dim 
$W_q=\frac{\phi(q)}{2}+ 2,$ it is sufficient to show that the above subset  is linearly independent over $\overline{\Q}.$
Consider the  $\overline{\Q}$- linear combination of the form 
\begin{equation}\label{h} 
 c_1H_{1/q} + ... + c_{\alpha+1} H_{(\alpha+1) /q} + c_{\alpha+2}H_1=0, \text{ where } c_i \in \overline{\Q}.
 \end{equation}
 Now substituting the value of $ H_{a/q},$ in $\eqref{h}$ we get
\begin{equation*}
\Big( c_1 q \ +\ ...\ + \  c_{\alpha+1} \frac{q}{(\alpha+1)} \ + \ c_{\alpha+2}\Big) -\log (2q)\big( c_1 \ +...+ \  c_{\alpha+1}\big) - 
\frac{\pi}{2}\Big(c_1\cot \frac{\pi}{q}\ +... + \ c_{\alpha+1} \cot \frac{(\alpha+1)\pi}{q}\Big)
\end{equation*}
\begin{equation}\label{j}
 + \ 2\sum_{l=1} ^{ \alpha } \sum_{k=1}^{\alpha+1}    \log \sin(l\pi /q) \big(c_k\cos (\frac{2 lk\pi }{q})\big) =0.
\end{equation}
\textcolor{red}{
Again by using proposition 2.2, we must have 
$$ \Big( c_1 q \ +\ ...\ + \  c_{\alpha+1} \frac{q}{(\alpha+1)} \Big) =0,$$
$$\Big(c_1\cot \frac{\pi}{q}\ +... + \ c_{\alpha+1} \cot \frac{(\alpha+1)\pi}{q}\Big)=0.$$
Thus, equation \eqref{j} reduces to
$$ -\log (2q)\big( c_1 \ +...+ \  c_{\alpha+1}\big) 
 + \ 2\sum_{l=1} ^{ \alpha } \sum_{k=1}^{\alpha+1}    \log \sin(l\pi /q) \big(c_k\cos (\frac{2 lk\pi }{q})\big) =0.$$
Now by using remark 1 in the above equation, we must have
$$ \big( c_1 \ +...+ \  c_{\alpha+1}\big) =0 \ \  \text{ and }
 \  \  \sum_{k=1}^{\alpha+1}   \big(c_k\cos (\frac{2 lk\pi }{q})\big) =0,$$
   for all $ 1 \leq l \leq \alpha.$
Thus, altogether we get a linear homogeneous system of $\alpha+3$ many equations in 
$\alpha+2$ many variables, $c_i^{~,}s,$ which is equivalent to a matrix equation  of the form $Ax=0$ where  $A$ is an $(\alpha+3) \times (\alpha+2)$ matrix and  $x=[c_1,...,c_r]^T$:}

\[
\begin{bmatrix}
  
      q   \quad     & \frac{q}{2}~~   & \dots & ~~ \frac{q}{\alpha+1} & 1 \\ \\
    \cot(\frac{\pi }{q})\quad     &  \cot(\frac{2\pi }{q})~~ & \dots &   \cot(\frac{(\alpha+1)\pi }{q}) & 0  \\ \\
      1 & 1 & \dots & 1 & 0 \\ \\
   
     \cos(\frac{2\pi }{q}) \quad    &  \cos(\frac{4\pi }{q}) & \dots & \cos(\frac{2(\alpha+1)\pi }{q})  & 0\\ \\
     
      \cos(\frac{4\pi}{q})  \quad    &  \cos(\frac{8\pi }{q}) & \dots & \cos(\frac{4(\alpha+1)\pi }{q}) & 0  \\ \\
      \vdots & \vdots & \vdots & \vdots & \vdots\\ \\
    
     \cos(\frac{2\alpha \pi  }{q}) ~~    &  \cos(\frac{4 \alpha \pi}{q}) & \dots & \cos(\frac{2\alpha(\alpha+1) \pi }{q}) & 0  
\end{bmatrix}
\begin{bmatrix}{}
    ~c_1~ \\ \\
    c_2 \\ \\
    c_3 \\ \\
    \vdots \\ \\
     \vdots \\ \\
     \vdots \\ \\
    ~c_{\alpha+2}~
\end{bmatrix}
=
\begin{bmatrix}{}
    ~0 ~\\ \\
    ~0~ \\ \\
    ~0 ~\\ \\
    \vdots \\ \\
     \vdots \\ \\
     \vdots \\ \\
   ~ 0~
\end{bmatrix}
\]
Also observe that, $\cot \frac{ \alpha  \pi}{q}=\cot\frac{\pi}{2q}=\ - \cot \frac{ (\alpha+1) \pi}{q}$ and 

$$ \cos (\frac{2t\alpha \pi}{q})= \cos \frac{2t(\alpha+1)}{q} \hspace{.3cm} \text{ for all} \hspace{.2cm} 1 \leq t \leq \alpha.$$
Hence, from the  above identity,  the matrix $A$ satisfies
\begin{equation}\label{s}
A_{i(\alpha)} = A_{i (\alpha+1)} \hspace{.3cm} \text{ for}\hspace{.2cm}  4 \leq i \leq \alpha+3 .
\end{equation}
To show that the equation $Ax=0$ has a trivial solution, it is sufficient to find a non- singular $(\alpha+2) \times (\alpha+2) $ submatrix of 
$A.$  Let $B$ be the   $(\alpha+2) \times (\alpha+2)$ submatrix of the matrix $A$ obtained by removing the third row of the matrix $A.$
Thus, by using $\eqref{s}$  and the elementary column operations on the matrix $B,$ we get

\[
B=
\begin{bmatrix}

      q   \quad     & \frac{q}{2}~~   & \dots & ~~ \frac{q}{\alpha} &  ~~\frac{q}{\alpha+1}-\frac{q}{\alpha}&~~ 1 \\ \\
    \cot(\frac{\pi }{q})\quad     &  \cot(\frac{2\pi }{q})~~ & \dots &   \cot(\frac{\alpha\pi }{q}) & -2\cot(\frac{\alpha\pi }{q}) ~~& 0  \\ \\
   
     \cos(\frac{2\pi }{q}) \quad    &  \cos(\frac{4\pi }{q}) & \dots & \cos(\frac{2\alpha\pi }{q})  & 0 &0\\ \\
     
      \cos(\frac{4\pi}{q})  \quad    &  \cos(\frac{8\pi }{q}) & \dots & \cos(\frac{4\alpha\pi }{q}) & 0&0  \\ \\
      \vdots & \vdots & \vdots & \vdots & \vdots& \vdots \\ \\
    
     \cos(\frac{2\alpha \pi  }{q}) ~~    &  \cos(\frac{4 \alpha \pi}{q}) & \dots & \cos(\frac{2\alpha^2 \pi }{q}) & 0  &0
\end{bmatrix}
\]

 To show that the matrix $B$  is non-singular, it is sufficient to show that the submatrix 

\[
C=
\begin{bmatrix}

     \cos(\frac{2\pi }{q}) \quad    &  \cos(\frac{4\pi }{q}) & \dots & \cos(\frac{2\alpha\pi }{q})  \\ \\
     
      \cos(\frac{4\pi}{q})  \quad    &  \cos(\frac{8\pi }{q}) & \dots & \cos(\frac{4\alpha\pi }{q})  \\ \\
      \vdots & \vdots & \vdots & \vdots \\ \\
    
     \cos(\frac{2\alpha \pi  }{q}) ~~    &  \cos(\frac{4 \alpha \pi}{q}) & \dots & \cos(\frac{2\alpha^2 \pi }{q}) 
\end{bmatrix}
\]
is non-singular.
To prove that the above submatrix is non-singular, we will use the following idea from Galois theory.

 Note that for any $t\in \N,$ we have,  $2\cos \frac{2t\pi}{q}=\zeta_q^t + \zeta_q^{-t},$ where $\zeta_q $ is the primitive $q$-th root of unity.  
 For any odd prime $q,$ consider the maximal real subfield $\Q(\zeta_q + \zeta_q^{-1})$   of the $q$-th  cyclotomic field $ \Q(\zeta_q)$ . Also 
 we know that, $|\Q(\zeta_q + \zeta_q^{-1}): \Q|=\frac{q-1}{2}$  and the ring of algebraic integers is $\Z[\zeta_q + \zeta_q^{-1}].$ 

Consider the $$\text{ Gal }(\Q(\zeta_q + \zeta_q^{-1}):\Q)= \{ \sigma_a \hspace{.1cm}  | \hspace{.2cm} \sigma_a(\zeta_q + \zeta_q^{-1})= \zeta_q^a + \zeta_q^{-a}, 
\hspace{.2cm} 1 \leq a \leq \frac{q-1}{2}\}.$$
The conjugates of $\zeta_q + \zeta_q^{-1}$  forms an integral basis for $\Q(\zeta_q + \zeta_q^{-1}).$ Let $\omega_i= \zeta_q^i + 
\zeta_q^{-i} \text{ for } 1 \leq i \leq \frac{q-1}{2}.$ Hence, $$ C_{ij}=\frac{\sigma_i(\omega_j)}{2}, \hspace{.3cm} \text{for}\hspace{.2cm}  1 \leq i,j \leq \alpha.$$
Since   $\sigma_i^{~,}s$  are linearly independent over $\Q(\zeta_q + \zeta_q^{-1})$ by using Dedekind's  lemma and $\omega_i^{~,}s$ 
forms an integral basis, hence the matrix $C$ is non-singular.
Thus, we will get a trivial solution for  $c_i^{~,}s.$ This completes the proof.

\end{proof}

In our next theorem, we will consider the case for  any finite number of primes.

\begin{thm}
For any finite set of odd primes $J,$ with $|J|=n,$ define 
$$W_J=\overline{\Q}- \text {span of } \{ H_1, \  H_{a_{j_i}/q_i} | \ 1 \leq a_{j_i} \leq q_i -1, \ 1 \leq j_i \leq q_i-1, \ \ \forall q_i \in J\}.$$
Then, $$\text{ dim }_{\overline{\Q}} ~W_J=\sum\limits_{\substack{i=1 \\ q_i \in J}}^n \frac{\phi (q_i )}{2} + 2.$$
\end{thm}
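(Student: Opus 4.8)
The plan is to exhibit an explicit $\overline{\Q}$-subspace $V$ of $\C$ with $W_J\subseteq V$ and $\dim_{\overline{\Q}}V=\sum_{q_i\in J}\frac{\phi(q_i)}{2}+2$, and then to show $V\subseteq W_J$. First I would clean up the Gauss formula \eqref{x}: substituting \eqref{v} in the form $\log q_i=2\sum_{n=1}^{\alpha_i}\log\!\big(2\sin\tfrac{\pi n}{q_i}\big)$, where $\alpha_i=\tfrac{q_i-1}{2}=\tfrac{\phi(q_i)}{2}$, and using the elementary identity $\sum_{n=1}^{\alpha_i}\cos\tfrac{2\pi n a}{q_i}=-\tfrac12$ (valid since $q_i\nmid a$), one finds that every generator has the shape
$$H_{a/q_i}=\frac{q_i}{a}-\frac{\pi}{2}\cot\frac{\pi a}{q_i}+2\sum_{n=1}^{\alpha_i}\Big(\cos\frac{2\pi n a}{q_i}-1\Big)\log\!\Big(2\sin\frac{\pi n}{q_i}\Big),$$
with no $\log 2$ and no $\log q_i$ surviving. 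Hence $W_J\subseteq V:=\overline{\Q}\text{-span}\Big(\{1,\pi\}\cup\big\{\log(2\sin\tfrac{\pi n}{q_i})\ :\ 1\le n\le\alpha_i,\ q_i\in J\big\}\Big)$.

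Next I would check that the $2+\sum_i\alpha_i$ displayed spanning vectors of $V$ are $\overline{\Q}$-linearly independent, which yields the upper bound. Since each $q_i$ is prime, every $n$ with $1\le n\le\alpha_i$ is coprime to $q_i$ and satisfies $n<q_i/2$, so Lemma 3.6 applies and the numbers $2\sin\tfrac{\pi n}{q_i}$ (over all $n$ and all $q_i\in J$) are multiplicatively independent; hence their logarithms are $\Q$-linearly independent, Proposition 2.1 makes $1$ together with these logarithms $\overline{\Q}$-linearly independent, and Proposition 2.2 rules out any further $\overline{\Q}$-relation involving $\pi$. Thus $\dim_{\overline{\Q}}V=\sum_{q_i\in J}\tfrac{\phi(q_i)}{2}+2$, so $\dim_{\overline{\Q}}W_J\le\sum_{q_i\in J}\tfrac{\phi(q_i)}{2}+2$.

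For the matching lower bound I would prove $V\subseteq W_J$. Clearly $1=H_1\in W_J$, and the displayed formula gives $H_{1/q_i}-H_{(q_i-1)/q_i}=q_i-\tfrac{q_i}{q_i-1}-\pi\cot\tfrac{\pi}{q_i}$ with $\cot\tfrac{\pi}{q_i}$ a nonzero algebraic number, so $\pi\in W_J$. Writing $U_i:=\overline{\Q}\text{-span}\{\log(2\sin\tfrac{\pi n}{q_i}):1\le n\le\alpha_i\}$, the independence just proved says $V=\overline{\Q}\cdot1\oplus\overline{\Q}\cdot\pi\oplus\bigoplus_{q_i\in J}U_i$. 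The displayed formula presents $H_{a/q_i}$ as an algebraic combination of $1$ and $\pi$ plus the vector $w_a^{(i)}:=2\sum_n(\cos\tfrac{2\pi n a}{q_i}-1)\log(2\sin\tfrac{\pi n}{q_i})\in U_i$; since $1,\pi\in W_J$ this forces each $w_a^{(i)}\in W_J$. Summing over $a=1,\dots,q_i-1$ and using $\sum_{a=1}^{q_i-1}\cos\tfrac{2\pi n a}{q_i}=-1$ shows the vector $\sum_n\log(2\sin\tfrac{\pi n}{q_i})$ lies in the $\overline{\Q}$-span of the $w_a^{(i)}$, and adding it back recovers $u_a^{(i)}:=2\sum_n\cos\tfrac{2\pi n a}{q_i}\log(2\sin\tfrac{\pi n}{q_i})\in W_J$ for every $a$. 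Finally the $\alpha_i\times\alpha_i$ matrix $\big[\cos\tfrac{2\pi n a}{q_i}\big]_{1\le n,a\le\alpha_i}$ is, up to the factor $\tfrac12$, the matrix $\big[\sigma_a(\zeta_{q_i}^{n}+\zeta_{q_i}^{-n})\big]$ of the maximal real subfield $\Q(\zeta_{q_i}+\zeta_{q_i}^{-1})$, which is nonsingular by the Galois-theoretic argument of the preceding theorem (Dedekind's lemma together with the integral basis $\{\zeta_{q_i}^{n}+\zeta_{q_i}^{-n}\}$). Hence $u_1^{(i)},\dots,u_{\alpha_i}^{(i)}$ span $U_i$, so $U_i\subseteq W_J$ for each $i$, whence $V\subseteq W_J$ and $\dim_{\overline{\Q}}W_J=\sum_{q_i\in J}\tfrac{\phi(q_i)}{2}+2$.

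The substantive inputs here are already established in the earlier sections: the \emph{cross-prime} multiplicative independence of the $2\sin(\pi n/q_i)$ in Lemma 3.6 (driving the upper bound) and the nonsingularity of the real-cyclotomic matrix via Galois theory (driving the lower bound). What is left is essentially bookkeeping — keeping the block decomposition $V=\overline{\Q}1\oplus\overline{\Q}\pi\oplus\bigoplus_i U_i$ honest so that the per-prime matrix argument can be run inside each $U_i$, and checking the small prime $q_i=3$ (where $\alpha_i=1$) directly, exactly as in the preceding theorem. The one computation that must be done with care is the initial clean-up: verifying that after substituting \eqref{v} the $\log 2$-terms cancel identically, since it is precisely this cancellation that makes the ambient dimension $\sum_i\tfrac{\phi(q_i)}{2}+2$ rather than one larger.
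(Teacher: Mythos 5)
Your proposal is correct, and it reaches the result by a genuinely different organization than the paper's proof. The paper proves the upper bound in two stages --- first $\dim W_J\le\sum_i\frac{\phi(q_i)}{2}+3$ from Theorem 3.10, then an improvement to $+2$ by the determinant-vanishing argument sketched in Remark 2 --- and proves the lower bound by exhibiting an explicit set of $\sum_i\frac{\phi(q_i)}{2}+2$ harmonic numbers (namely $H_{a/q_i}$ for $1\le a\le\frac{q_i-1}{2}$, together with $H_{(q_1+1)/2q_1}$ and $H_1$) and showing it is $\overline{\Q}$-independent via a large block matrix whose diagonal blocks $\bigl[-1+\cos\frac{2\pi jk}{q_r}\bigr]$ are nonsingular by the Dedekind/integral-basis argument. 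You instead sandwich $W_J$ between inclusion in and containment of an explicit ambient space $V=\overline{\Q}1\oplus\overline{\Q}\pi\oplus\bigoplus_i U_i$: the inclusion $W_J\subseteq V$ rests on the same $\log 2$-cancellation after substituting \eqref{v} that produces the paper's $-1+\cos(\cdot)$ entries, the exact dimension of $V$ comes from Lemma 3.6 plus Propositions 2.1 and 2.2, and the reverse inclusion uses the transpose of the same cosine-matrix nonsingularity to show each $U_i$ is reachable from the $H_{a/q_i}$. What your route buys is (i) an upper bound obtained in one step, bypassing both Theorem 3.10 (whose hypothesis $3\notin J$ the paper's proof technically inherits even though the theorem statement allows $3\in J$) and the only-sketched ``similar technique as in Remark 2'' step, and (ii) a lower bound phrased as spanning rather than independence, which makes the role of the two distinguished vectors $1$ and $\pi$ (extracted from $H_1$ and from $H_{1/q}-H_{(q-1)/q}$) completely transparent; the substantive inputs --- cross-prime multiplicative independence of the $2\sin(\pi n/q_i)$, Baker's theorem, and nonsingularity of $\bigl[\sigma_a(\zeta_{q_i}^n+\zeta_{q_i}^{-n})\bigr]$ --- are identical to the paper's.
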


\begin{proof}
First note that from theorem 3.10, we have
$$\text{ dim }_{\overline{\Q}} ~W_J\leq \sum\limits_{\substack{i=1 \\ q_i \in J}}^n \frac{\phi (q_i )}{2} + 3.$$
 Choose any subset of the set 
 $$ \{ H_1, \  H_{a_{j_i}/q_i} | \ 1 \leq a_{j_i} \leq q_i -1, \ 1 \leq j_i \leq q_i-1, \ \ \forall q_i \in J\}$$
 say $T,$
containing $\sum\limits_{i=1}^n \frac{\phi (q_i )}{2} + 3$ many elements.
 Consider the  equation
 \begin{equation}\label{zb}
 \sum\limits_{\substack{s \\ q_s \in J }}\sum_{t_s}  c_{st_s} H_{t_s/q_s^{m_s}}=0,
 \end{equation}
 where $c_{st_s}^{~,}s \in \overline{\Q}$ and  the sum runs over all $s,t_s$ such that $H_{t_s/q_s^{m_s}} \in T.$ Clearly the number of variables 
 $c_{st_s}^{~~,}s,$ is $\sum\limits_{i=1}^n \frac{\phi (q_i )}{2} + 3$.  Substituting the Gauss formula in \eqref{zb} and using lemma 3.6, 3.7, 
 Baker's theorem and proposition 2.2, we will get a system of linear homogeneous equations in $\sum\limits_{i=1}^n \frac{\phi (q_i )}{2} + 3$ 
 equations and $\sum\limits_{i=1}^n \frac{\phi (q_i )}{2} + 3$  variables.
Now  using  the similar technique as  we did in  remark 2 to prove that the determinant of the matrix is zero, it is not difficult to get that
$$\text{ dim }_{\overline{\Q}} ~W_J\leq \sum\limits_{i=1}^n \frac{\phi (q_i )}{2} + 2.$$
Now  consider the set
$$\{  H_{a_{j_i}/q_i}, \ H_{(\frac{q_1+1}{2})/q_1},\  H_1 \ | \  1 \leq a_{j_ i} \leq \frac{q_i-1}{2} \ \text{ and } 1 \leq j_i \leq \frac{q_i-1}{2},\  \forall \ q_i \in J\} $$
containing $\sum\limits_{i=1}^n \frac{\phi (q_i )}{2} + 2$ many elements.  Thus, to prove theorem 4.2, it is sufficient to show that the above set is linearly independent over $\overline{\Q}.$
Consider the $\overline{\Q}$- linear combination of the above set
$$\sum\limits_{i=1}^n\sum\limits_{j_i=1}^\frac{q_i-1}{2}c_{j_i}^i H_{a_{j_i}/q_i} + c_{(\frac{q_1+1}{2})/q_1} H_{(\frac{q_1+1}{2})/q_1} + c_1  H_1 =0. $$
 Substituting the value of $H_{a/q}$ in the above equation, we get

$$\bigg(\Big (\sum\limits_{i=1}^n\sum\limits_{j_i=1}^\frac{q_i -1}{2} c_{j_i}^i \frac{q_i}{{a_{j_i}}}\Big)  \ + \ c_{(\frac{q_1+1}{2})/q_1} \ 
\frac{2q_1}{q_1+1} \ +  \ c_1 \bigg) - \frac{\pi}{2}\bigg(\sum\limits_{i=1}^n \sum\limits_{i=1}^\frac{q_i-1}{2} (c_{j_i}^i \cot (\frac{{a_{j_i}} \pi}{q_i}))\bigg ) - $$

$$  - \frac{\pi}{2}\bigg( c_{(q_1+1)/2q_1} \cot(\frac{(q_1+1) \pi}{2q_1}\big) \bigg)  - \bigg( \sum\limits_{i=1}^n \sum\limits_{j_i =1}^\frac{q_i-1}{2}c_{j_i}^i 
\log(2q_i) \bigg) \ -\bigg( c_{(\frac{q_1+1}{2})/q_1}\log(2q_1) \bigg)  $$

  $$  + \bigg( 2 \sum\limits_{i=1}^n \sum\limits_{j_i=1}^\frac{q_i -1}{2}\bigg(\sum\limits_{k=1}^\frac{q_i-1}{2}c_{j_i}^i\cos ( \frac{2 a_{j_i} k \pi}{q_i})
  \log\sin(\frac{k\pi}{q_i})\bigg) \bigg) + $$
  
 \begin{equation}\label{al}
    + \bigg( 2 c_{(\frac{q_1+1}{2})/q_1}\bigg( \sum\limits_{k=1}^\frac{q_1 -1}{2}\cos(\frac{2(q_1+ 1) \pi k}{2q_1})\log \sin(\frac{k\pi}{q_1})\bigg) \bigg)=0.
 \end{equation} 

\textcolor{red}{Now proposition 2.2 implies both the algebraic term and the coefficient of $\pi$ in the  equation \eqref{al} to be simultaneously zero, that is}
   $$\Big(\sum\limits_{i=1}^n\sum\limits_{j_i=1}^\frac{q_i -1}{2} c_{j_i}^i \frac{q_i}{a_{j_i}}\Big)  \ + \ c_{(\frac{q_1+1}{2})/q_1} \  \frac{2q_1}{q_1+1} \ +  \ c_1 =0,$$
  $$
  - \frac{\pi}{2}\bigg(\Big(\sum\limits_{i=1}^n \sum\limits_{i=1}^\frac{q_i-1}{2} (c_{j_i}^i \cot (\frac{a_{j_i} \pi}{q_i})) \Big) \  + 
  \  c_{(q_1+1)/2q_1} \cot(\frac{(q_1+1) \pi}{2q_1}) \bigg) =0. $$
 
  \vspace{.4cm}
 Thus equation \eqref{al} reduces to

 $$- \Big( \ \sum\limits_{i=1}^n \Big( \sum\limits_{j_i =1}^\frac{q_i-1}{2}c_{j_i}^i \log(2q_i)\Big)\ \Big) - \ c_{(\frac{q_1+1}{2})/q_1}\Big(\log(2q_1)\Big) 
 $$
 $$+ \ \ 2 \bigg( \sum\limits_{i=1}^n \sum\limits_{j_i=1}^\frac{q_i -1}{2}c_{j_i}^i\bigg(  \sum\limits_{k=1}^\frac{q_i-1}{2}
 \Big(  \cos ( \frac{2 a_{j_i} k \pi}{q_i})\Big) \log\sin(\frac{k\pi}{q_i})\bigg) \bigg) $$ 

  \begin{equation}\label{p}
   +  \ \ 2 c_{(\frac{q_1+1}{2})/q_1} \bigg(\sum\limits_{k=1}^\frac{q_1 -1}{2}\Big(  \cos(\frac{2(q_1+ 1) \pi k}{2q_1})\Big) \log \sin(\frac{k\pi}{q_1})\bigg)=\ 0.
   \end{equation}
Also by using lemma 3.7, we have
$$\log q_i=2\Big(\log \sin(\frac{\pi}{q_i}) +... + \log \sin(\frac{\alpha_i\pi}{q_i}) + \alpha_i \log 2\Big)$$
where $ \alpha_i =\frac{q_i-1}{2}$ for all $ 1 \leq i \leq n.$ Substituting the value of $\log q_i $ in  \eqref{p}, we get,
$$- \Big(\Big(\ \sum\limits_{i=1}^n \sum\limits_{j_i =1}^\frac{q_i-1}{2}c_{j_i}^i  ( q_i ) \Big) \ \ + \ \ (c_{(\frac{q_1+1}{2})/q_1 })q_1\Big)\log2
  \ + \ 2 \sum\limits_{i=1}^n \sum\limits_{j_i=1}^\frac{q_i -1}{2}\sum\limits_{k=1}^\frac{q_i-1}{2}c_{j_i}^i\Big(  -1+\cos ( \frac{2 a_{j_i} k \pi}{q_i})\Big) \log\sin(\frac{k\pi}{q_i}) $$ 
  \begin{equation}\label{bb}
   +  \ 2 c_{(\frac{q_1+1}{2})/q_1} \sum\limits_{k=1}^\frac{q_1 -1}{2}\Big( -1+ \cos(\frac{2(q_1+ 1) \pi k}{2q_1})\Big) \log \sin(\frac{k\pi}{q_1})=0.
   \end{equation}

Now using the fact that the set
$$\{\log2,  \  \log \sin \frac{k_{j_i} \pi}{q_i } | \hspace{.2cm} 1 \leq k_{j_i}  \leq \frac{q_i-1}{2}  \text{ and }  1 \leq j_i \leq \frac{q_i -1}{2}, \ \forall \ q_i \in J \}$$
is linearly independent over $\overline{\Q}$ ( by lemma 3.6,  lemma 3.7 and Baker's theorem) in  \eqref{bb},
we get a linear homogeneous system of $\sum\limits_{i=1}^n \frac{\phi (q_i )}{2} + 1$  equations. Thus, altogether  we get a homogeneous system of 
$\sum\limits_{i=1}^n \frac{\phi (q_i )}{2} + 3 $ many equations in $\sum\limits_{i=1}^n \frac{\phi (q_i )}{2} + 2$  variables that is we will get a 
system of the form $Ax=0,$ where $A$ is a $(\sum\limits_{i=1}^n \frac{\phi (q_i )}{2} + 3) \times (\sum\limits_{i=1}^n \frac{\phi (q_i )}{2} + 2) $
matrix and $A$ is of the form: 

\[
A=
\begin{bmatrix}
  q_1  & \dots  & \frac{2q_1}{q_1 -1} & \dots   & \frac{2q_2}{q_2-1} & \dots & \frac{2q_n}{q_n-1} & \frac{2q_1}{q_1+1} - \frac{2q_1}{q_1-1} & 1 \\ \\
    q_1 & \dots  &  q_1 & \dots &  q_2 & \dots & q_n & 0 & 0 \\ \\
    \cot(\frac{\pi }{q_1})\quad     & \dots &  \cot(\frac{(q_1-1)\pi }{2q_1})  & \dots & \cot(\frac{(q_2-1)\pi }{2q_2} ) & \dots & 
    \vspace{.7cm}
     \cot(\frac{(q_n-1)\pi }{2q_n} & -2\cot(\frac{(q_1+1)\pi }{2q_1} & 0\\ \\
     \vspace{.4cm}
    &   {\huge A_{q_1}} & & &    O &    &   & 0  & 0 \\ \\
    & & & & &  & & \vdots & \vdots \\ \\
    & O & & &   A_{q_2} &   &  & 0 & 0 \\ \\
    & \vdots & & &  & \ddots  & &    \vdots & \vdots  \\ \\
    
     &  O &   &  & O & &  A_{q_n}  & 0 & 0\\ \\ 
\end{bmatrix}
\]
where $O$ is the zero matrix and  each submatrix $A_{q_r}$  is a $( \alpha_r \times \alpha_r)$ matrix where $ \alpha_r=\frac{q_r-1}{2}$ for all 
$ 1 \leq r \leq n.$  Also for any $ 1 \leq r \leq n,$  $A_{q_r}$ is of the form

\[
A_{q_r}=
\begin{bmatrix}

     -1 + \cos(\frac{2\pi }{q_r}) \quad    &  -1 + \cos(\frac{4\pi }{q_r}) & \dots & -1 + \cos(\frac{2\alpha_r\pi }{q_r})  \\ \\
     
     -1 +  \cos(\frac{4\pi}{q_r})  \quad    &  -1+ \cos(\frac{8\pi }{q_r}) & \dots & -1 +  \cos(\frac{4\alpha_r\pi }{q_r})  \\ \\
      \vdots & \vdots & \vdots & \vdots \\ \\
    
   -1+ \cos(\frac{2\alpha_r \pi  }{q_r}) ~~    & -1 +  \cos(\frac{4 \alpha_r \pi}{q_r}) & \dots & -1 + \cos(\frac{2\alpha_r^2 \pi }{q_r}) 
\end{bmatrix}
\]
 for all $ 1 \leq r \leq n.$\\
Now to show that the equation $Ax=0$ has only trivial solution, it is sufficient to show that the matrix $A$ has rank $\sum\limits_{i=1}^n \frac{\phi (q_i )}{2} + 2$.
Consider the submatrix of $A,$  say $B$ after removing the second row, which is an $\big( \sum\limits_{i=1}^n \frac{\phi (q_i )}{2} + 2\big)  
\times \big(\sum\limits_{i=1}^n \frac{\phi (q_i )}{2} + 2\big)$ matrix. Now to show that the submatrix $B$ is non-singular, it is sufficient to 
show that each submatrix $ A_{q_r}$ for all $ 1\leq r \leq n$ is non-singular.

 Note that for any $t\in \N,$ we have,  $2\cos \frac{2t\pi}{q_r}=\zeta_{q_r}^t + \zeta_{q_r}^{-t},$ where $\zeta_{q_r} $ is the primitive $q_r$-th root of unity. 
 Also we know that, $|\Q(\zeta_{q_r} + \zeta_{q_r}^{-1}): \Q|=\frac{q_r-1}{2}$ for all $ 1 \leq r \leq n.$ 
Consider the $$\text{ Gal }(\Q(\zeta_{q_r} + \zeta_{q_r}^{-1}):\Q)= \{ \sigma_a \hspace{.1cm}  | \hspace{.2cm} \sigma_a(-2 + \zeta_{q_r} + \zeta_{q_r}^{-1})= -2 
+ \zeta_{q_r}^a + \zeta_{q_r}^{-a}, \hspace{.2cm} 1 \leq a \leq \frac{q_r-1}{2}\}$$
for all $1 \leq r \leq n.$
The conjugates of $-2 + \zeta_{q_r} + \zeta_{q_r}^{-1}$  forms an integral basis for $\Q(\zeta_{q_r} + \zeta_{q_r}^{-1})$ for all $ 1 \leq r \leq n$. 
Let $\omega_{a_r}^r= -2 + \zeta_{q_r}^{a_r}+ \zeta_{q_r}^{-a_r} \text{ for } 1 \leq a_r \leq \frac{q_r-1}{2}$ for all $ 1 \leq r \leq n$.  
Then for each $ 1 \leq r \leq n,$ \  $ \{\omega_{a_r}^r \ | \ 1 \leq a_r \leq \frac{q_r-1}{2}\} $ forms an integral basis for $\Q(\zeta_{q_r} + \zeta_{q_r}^{-1})$.
Also observe that $$ (A_{q_r})_{ij}=\frac{\sigma_i(\omega_j^r)}{2}, \hspace{.3cm} \text{for}\hspace{.2cm}  1 \leq i,j \leq \frac{q_r-1}{2}.$$
Since   $\sigma_i^{~,}s$  are linearly independent over $ \Q(\zeta_{q_r} + \zeta_{q_r}^{-1})$  for all $ 1 \leq r \leq n$ by using Dedekind's lemma 
and ${\omega_{j}^r}^{~,}s$ forms an integral basis, for each $ 1 \leq r \leq n,$ thus the matrix $B$ is non-singular.
Hence, we will get a trivial solution for  $c_{j_i}^{~,}s.$ This completes the proof.
\end{proof}

\par
\noindent
{\bf Acknowledgements.}
The authors thank  Prof. M. Ram Murty for his comments on an earlier version of this paper. The authors would like to thank the referee for his insightful
suggestions. The authors also thank Mr. Suraj Singh Khurana for his help in the revision process of this article. The first author
would like to thank Universit\'{e} Pierre et Marie Curie (University of Paris VI) for the hospitality where some part of the work
was done and the National Board for Higher Mathematics for the partial support.


\begin{thebibliography}{10}


\bibitem{AB}
A. Baker, {\em Transcendental Number Theory}, Cambridge univ. Press,  (1975).

\bibitem{bass}
H. Bass, \textit{Generators and relations for cyclotomic units}, Nagoya Math Journal, Volume 27, Part 2 (1966), 401-407.

\bibitem{CD}
T. Chatterjee and S. Dhillon, {\em Linear independence of  harmonic numbers over the field of algebraic numbers II}, preprint.

\bibitem{TCS}
T. Chatterjee and S. Gun, {\em The digamma function, Euler-Lehmer constants and their p-adic counterparts}, Acta Arithmetica
{\bf162 (2)} (2014) 197--208, MR3167891.

\bibitem{CK}
T. Chatterjee and S. S. Khurana, {\em Erd\H{o}sian functions and Associated L-functions}, submitted.

\bibitem{CM}

T. Chatterjee and M. R. Murty, {\em Non-vanishing of Dirichlet series with periodic coefficients},
J. Number Theory, {\bf145} (2014), 1--21, MR3253290.

\bibitem{CMP}
 T. Chatterjee, M. Ram Murty and Siddhi Pathak, {\em A vanishing criterion for Dirichlet series with periodic coefficients},
Contemporary Mathematics, AMS Proceedings, 701, 2018, 69--80.


\bibitem{ennola}
V. Ennola, \textit{On relations between Cyclotomic Units}, Journal of Number Theory, \textbf{4}, (1972) 236-247.


\bibitem{MM}

M. Ram Murty, V. Kumar Murty, {\em A problem of Chowla revisited}, J. Number Theory {\bf131 (9)} (2011) 1723-1733, MR2802143.



\bibitem{MS1}

M. Ram Murty and N. Saradha, {\em Transcendental values of the digamma function}, J. Number Theory {\bf 125} (2007) 298--318.

\bibitem{RM}

M. Ram  Murty and N. Saradha, {\em  Euler-Lehmer constants and a conjecture of Erdös}, J. Number Theory  {\bf 130 (12)}  (2010) 2671-
2682.


\end{thebibliography}
\end{document}